\numberwithin{equation}{section}
\numberwithin{equation}{section}
\tikzset{main node/.style={circle,fill=white!20,draw,minimum size=1cm,inner sep=0pt},
	}	
\theoremstyle{plain}
\newtheorem{Th}{Theorem}[section]
\newtheorem{Lemma}[Th]{Lemma}
\newtheorem{Cor}[Th]{Corollary}
\theoremstyle{definition}
\newtheorem{Def}[Th]{Definition}
\newtheorem{Rem}[Th]{Remark}
\newtheorem{?}[Th]{Problem}
\newtheorem{Ex}[Th]{Example}
\newcommand{\rk}{\operatorname {rk}}
\newtheorem*{Nt*}{Note}
\newcommand{\im}{\operatorname{im}}
\begin{document}

 	\setcounter{page}{1}
\vspace{2cm}

\author{ Gökçen Dilaver$^{1*}$ , Selma Altınok$^2$ }
\title{ Generalized Splines over $\mathbb{Z}$-Modules on Arbitrary Graphs}

\thanks{\noindent $^1$ Hacettepe University, Graduate School of Science and Engineering,  Beytepe, Ankara, Turkey\\
	\indent \,\,\, e-mail: gokcen.dilaver11@hacettepe.edu.tr ; ORCID: https://orcid.org/0000-0001-6055-111X. \\
	\indent  $^*$ \, Corresponding author. \\
	\indent $^2$ Hacettepe University Department of Mathematics,  Beytepe, Ankara, Turkey\\
	\indent \,\,\,  e-mail: sbhupal@hacettepe.edu.tr ; ORCID: https://orcid.org/0000-0002-0782-1587. \\
}

	\begin{abstract}
		Let $R$ be a commutative ring with identity and $G$ a graph. \emph{An extending generalized spline} on $G$ is a vertex labeling $f \in \prod_{v} M_v$, where for each edge $e=uv$ there exists an $R$-module $M_{uv}$ together with homomorphisms $ \varphi_u : M_u \to M_{uv}$ and  $ \varphi_v : M_v \to M_{uv}$  such that $\varphi_u(f_u)=\varphi_v(f_v).$ Extending generalized splines are further generalizations for generalized splines. They can also be considered as generalized splines over modules.
		
In this paper,   we prove that some of the results for  splines can be extended to generalized splines over modules $M_v=m_v\mathbb Z$ at each vertex $v$ and we define a method of a graph reduction based on graph operations on vertices and edges to produce an explicit $\mathbb{Z}$-module basis for generalized splines over modules. This corresponds to a sequence of surjective homomorphisms between the associated spline modules so that the space of splines decomposes as a direct sum of certain submodules.
	\end{abstract}
	
	\maketitle

	\textit{Keywords:} Splines, module bases, algebraic graph theory.  \\
	
	 \textit{AMS Subject Classification:}  05C25,  05C78, 05E16, 05C60.
	\section{Introduction} 
	
The term "spline" originally came from engineering, where it was used to model complex structures like ships and automobiles. Mathematicians later adopted the term to refer to what are now known as "classical splines"—piecewise polynomial functions defined on polyhedral complexes, which agree along the intersections of adjacent faces. A central focus of classical spline theory is  determining the dimension of the spline space and to construct explicit bases for splines of a given degree. The algebraic properties of classical splines have been extensively studied by researchers such as Billera \cite{Billera,BR91,BR92}, Rose \cite{Rose95,Rose04}, and Schenck \cite{Schenck}. Generalized spline theory builds upon the framework introduced by Billera and Rose \cite{BR91}, which defines classical splines in terms of the dual graph of a polyhedral complex. This broader theory has diverse applications in areas requiring smooth approximations, interpolation, and efficient handling of complex or large-scale data, including geometric modeling, network theory, machine learning, numerical methods, medical imaging, robotics, and finance. Owing to their versatility, generalized splines serve as powerful tools in both theoretical research and practical applications.

In this paper, we study the theory of extending generalized splines. Extending generalized splines were first mentioned in the section of open questions (see Gilbert, Polster, and Tymoczko \cite{GPT}), and then were described in Tymoczko \cite{Tymoczko}. Later, in \cite{DA}, we provided a detailed definition. These are generalized splines whose vertices are labeled by modules $M_v$ rather than by the ring $R$ itself. More explicitly, let $R$ be a commutative ring with identity and $G=(V,E)$ an arbitrary graph, defined as a set of vertices $V$ and edges $E$.
 An \emph{extending generalized spline} on $(G,\beta)$ is a vertex labeling $f \in \prod_{v} M_v$ such that for each edge $uv$ there is a $R$-module $M_{uv}$  together with $R$-module homomorphisms  $ \varphi_u : M_u \to M_{uv}$ and  $ \varphi_v : M_v \to M_{uv}$ satisfying  $\varphi_u(f_u)=\varphi_v(f_v).$
The set of extending generalized splines on $G$ is given by
$$\hat{R}_{G}= \{f \in \prod_{v\in V} M_v \mid \text{for each edge }\: e=uv, \: \varphi_u(f_u)=\varphi_v(f_v)\}.$$
This set $\hat{R}_G$ has the structure of an $R$-module.
In the special case where each $M_v = R$ and each $M_{uv} = R/\beta(uv)$, with $\beta$ assigning an ideal of $R$ to each edge $uv$ and each $\varphi_u : M_u \to M_{uv}$ being the natural quotient map, the definition of extending generalized splines reduces to the definition of generalized splines $R_{(G,\beta)}$ over $R$ (see Gilbert, Polster, and Tymoczko \cite{GPT}).

Handschy, Melnick, and Reinders \cite{HMR} presented a special type of generalized spline, called flow-up class, and showed the existence of the smallest flow-up classes on cycles over $\mathbb{Z}$. Moreover, they proved that these flow-up classes form a basis for generalized spline modules. Altınok and Sarıoğlan \cite{AS2019} provided an explicit formula for constructing a flow-up basis of generalized splines on arbitrary graphs over a principal ideal domain (PID) $R$. In our work \cite{DA}, we proposed a longest path technique as a method for building a flow-up basis for generalized splines over modules on arbitrary graphs.

 Rose and Suzuki \cite{RS} introduced the concept of graph reduction—referred to as graph collapsing—in the context of generalized splines. In \cite{DA}, we  proposed a graph reduction approach for extending generalized splines, based on the use of zero-labeled vertices.

In this paper, we first show that certain results previously established for splines over a ring naturally extend to the more general setting of splines over modules. We then introduce a general framework for graph reduction by using graph operations on arbitrary graphs to produce a basis for extending generalized splines. An explicit construction of a $\mathbb{Z}$-module basis in terms of the edge labels is derived, which corresponds to a sequence of surjective maps between the associated spline modules. Under our assumptions, we can reformulate and extend graph operations to the setting of extending generalized splines. The goal of graph reduction in the context of extending generalized splines is to reduce the number of vertices and edges  in the graph while preserving the essential algebraic structure imposed by the splines.

The results of our work naturally generalize to generalized splines over modules $M_v=m_vR$ at each vertex $v$ for a principal ideal domain (PID) $R$.


	
	\section{Background and Notations}

We use $P_n$ to denote a path graph with $n$ vertices; $C_n$ to denote a cycle graph with $n$ vertices and $K_n$ to denote a complete graph with $n$ vertices. Throughout the paper, we write $( \quad )$
for the greatest common divisor and $[\quad ]$ for the least common multiple.
\begin{Def}
	
	Let $R$ be a ring and $G = (V,E)$ a graph. An edge-labeling function is a map $ \beta : E \to \{\text{R-modules}\}$ which assigns an $R$-module $M_{uv}$  to each edge $uv$. We call the pair $(G, \beta)$ as an \emph{edge-labeled graph}.
	
\end{Def}

\begin{Def}

	Let $R$ be a ring and $(G,\beta)$ an edge-labeled graph. An \emph{extending generalized spline} on $(G,\beta)$ is a vertex labeling $f \in \prod_{v} M_v$ such that for each edge $uv$ there is a $R$-module $M_{uv}$  together with $R$-module homomorphisms  $ \varphi_u : M_u \to M_{uv}$ and  $ \varphi_v : M_v \to M_{uv}$ satisfying  $\varphi_u(f_u)=\varphi_v(f_v).$ The collection of extending generalized splines on $(G,\beta)$ with the base ring $R$ is denoted by ${\hat R}_{G}$:
	$${\hat R}_{G}= \{f \in \prod_{v} M_v \mid \text{for each edge}\: uv, \: \varphi_u(f_u)=\varphi_v(f_v)\}.$$
	For elements of $\hat{R}_G $ we use either a column matrix notation with ordering from bottom to top as  	
	$$	F =
	\left( \begin{array}{c}
		f_{v_n} \\
		\vdots\\
		f_{v_1}
	\end{array} \right) \in \hat{R}_G
	$$
	\raggedright or  a vector notation as $	F= (f_{v_1},\ldots,f_{v_n}).$
	
	For the sake of simplicity,  we refer to extending generalized splines as splines. 
\end{Def}

\begin{Rem}
	
	$\hat{R}_G$ has an $R$-module structure by the following two binary operations:	
	\begin{align*}	
		\hat R_G \times \hat{R}_G \to \hat{R}_G ,
		& \, (f,g) \mapsto f+g:= (f_{v_1}+g_{v_1},f_{v_2}+g_{v_2},\ldots,f_{v_n}+g_{v_n}),\\
		R \times \hat{R}_G \to  \hat{R}_G, & \, (r,f) \mapsto rf:= (rf_{v_1},rf_{v_2}, \ldots, rf_{v_n})
	\end{align*}
	where $f=(f_{v_1},\ldots,f_{v_n})$ and $g=(g_{v_1},\ldots,g_{v_n})$.

\end{Rem}

\begin{Def}
	A  spline  $F= (f_{v_1},\ldots,f_{v_n}) \in  \prod_{v_i} M_{v_i} $ is\emph{ nontrivial} if  $f_{v_j}$ is nonzero for at least one of the $j$. Note that $(0,\ldots,0)$ is a trivial spline. 
\end{Def}
In this paper, we consider a graph $G$ whose vertices are labeled by $\mathbb{Z}$-modules $M_v:= m_v \mathbb{Z}$ and edges $e$ are labeled by $\mathbb{Z}$-modules $M_e := \mathbb{Z}/r_e\mathbb Z$ together with quotient homomorphisms $ \varphi_v : M_v \to M_e$, defined by $a \to a+r_e\mathbb Z$, for each vertex $v$ incident to $e$ unless otherwise stated. We study extending generalized splines ${\hat R}_G$ on $G$ satisfying $\varphi_u(f_u)=\varphi_v(f_v)$ or equivalently $f_u-f_v\in r_e\mathbb{Z}$ where $e=uv$.

We can construct extending generalized splines on disjoint unions of graphs by taking the direct sum of the corresponding spline modules. If $(G,\beta) $  is the union of two disjoint graphs $G_1$ and $G_2$ then $\hat{R}_G = \hat{R}_{G_1} \oplus \hat{R}_{G_2}$. In this article, unless stated otherwise, we assume that $G$ is a connected graph.

\begin{Def}
	Let  $(G,\beta)$  be an edge-labeled graph with an $R$-module $M_v$ assigned to each vertex $v$ and $\beta(uv)= M_{uv}$  at each edge $uv$. An \emph{$i$-th flow-up class} $F^{(i)}$ over $(G,\beta)$ with $1\le i \le n $ 
	is a spline for which the component $f_{v_i}^{(i)} \neq 0 $ and $f_{v_s}^{(i)} = 0 $ whenever $s < i $. The set of all $i$-th  flow-up classes is denoted by $\hat{\mathcal{F}}_i$.
\end{Def}

\begin{Def}
	Let $F^{(i)}=(0,\dots,0,f_{v_i}^{(i)}, \dots, f_{v_n}^{(i)}) \in \hat{\mathcal{F}_i}$   be a flow-up class with $f_{v_i}^{(i)} \neq 0$ for $i=1,2,\ldots,n$ and $f_{v_s}^{(i)} =0$ for all $s <i$. We define the leading term of $F^{(i)}, LT(F^{(i)})=f_{v_i}^{(i)}$, the first nonzero entry of $F^{(i)}$. 
	The set of all leading terms of splines in $\hat{\mathcal{F}_i}$ including a trivial spline, $<LT(\hat{\mathcal{F}_i})>,$ forms an ideal of $\mathbb{Z}$.
\end{Def}

\begin{Def}
	A flow-up class $F^{(i)} $ is called a minimal element of $ \hat{\mathcal{F}_i}$ if $LT(F^{(i)})$ is a generator of the ideal  $LT(\hat{\mathcal{F}_i})$.
\end{Def}

\begin{Def}
	\emph{A minimum generating set} for a $\mathbb{Z}$-module $\hat{R}_G$ is a spanning set of splines with the smallest possible number of elements. The size of a minimum generating set is called  \emph{rank} and is denoted by $\rk \hat{R}_G$.
\end{Def}

\begin{Th}[The Chinese Remainder Theorem] \label{CRT}
	Let $R$ be a PID and $x, a_1,\ldots, a_n, b_1, \ldots, b_n \in R$. Then the system
	\begin{align*}
		x & \equiv a_1 \mod b_1 \\
		x & \equiv a_2 \mod b_2 \\
		& \quad \quad \vdots\\
		x & \equiv a_n \mod b_n
	\end{align*}
	has a solution if and only if $a_i \equiv a_j \mod (b_i, b_j )$ for all $i, j \in \{1,\dots,n\}$ with $i \neq j$.
	The solution is unique modulo $[b_1,\dots,b_n]$.

\end{Th}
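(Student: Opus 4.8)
The plan is to prove necessity by a direct divisibility argument and sufficiency by induction on $n$, using the two–congruence case as the basic tool, and then to read off uniqueness.

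For necessity: if $x$ solves the system then $b_i \mid x-a_i$ and $b_j \mid x-a_j$, so $(b_i,b_j)$ divides $(x-a_i)-(x-a_j)=a_j-a_i$, which is exactly $a_i \equiv a_j \pmod{(b_i,b_j)}$. For sufficiency the base case $n=2$ is explicit: write $d=(b_1,b_2)=sb_1+tb_2$ by Bézout (available since $R$ is a PID) and note $\tfrac{b_1}{d},\tfrac{b_2}{d}\in R$ since $d$ divides both; as the hypothesis gives $d\mid a_2-a_1$, the element $x:=a_1\tfrac{tb_2}{d}+a_2\tfrac{sb_1}{d}$ lies in $R$ and satisfies $x-a_1=\tfrac{sb_1}{d}(a_2-a_1)\in b_1R$ and $x-a_2=\tfrac{tb_2}{d}(a_1-a_2)\in b_2R$, using $1=\tfrac{sb_1}{d}+\tfrac{tb_2}{d}$.

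For the inductive step, assume the theorem for $n-1$ congruences. Given the compatibility conditions for all $n$ indices, the induction hypothesis produces a solution $y$ of the first $n-1$ congruences. Set $m:=[b_1,\dots,b_{n-1}]$; it remains to solve the pair $x\equiv y\pmod m$, $x\equiv a_n\pmod{b_n}$, which by the $n=2$ case is solvable as soon as $y\equiv a_n\pmod{(m,b_n)}$. Here I will invoke the gcd–lcm distributivity identity
\[
(m,b_n)=\big([b_1,\dots,b_{n-1}],b_n\big)=\big[(b_1,b_n),\dots,(b_{n-1},b_n)\big],
\]
valid in a PID. For each $i\le n-1$ one has $y\equiv a_i\pmod{b_i}$, hence $y\equiv a_i\pmod{(b_i,b_n)}$, while the hypothesis gives $a_i\equiv a_n\pmod{(b_i,b_n)}$; combining these, $y\equiv a_n\pmod{(b_i,b_n)}$ for every $i\le n-1$, and therefore $y\equiv a_n$ modulo $[(b_1,b_n),\dots,(b_{n-1},b_n)]=(m,b_n)$, which is the required compatibility. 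This closes the induction. Uniqueness is then immediate: if $x,x'$ both solve the system then $b_i\mid x-x'$ for all $i$, so $[b_1,\dots,b_n]\mid x-x'$, and conversely adding any multiple of $[b_1,\dots,b_n]$ to a solution yields another solution.

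The step I expect to be the main obstacle is the distributivity identity $([b_1,\dots,b_{n-1}],b_n)=[(b_1,b_n),\dots,(b_{n-1},b_n)]$. Since a PID is a UFD, I would establish it by comparing $p$–adic valuations at each prime $p$, where it reduces to the elementary identity $\min(\max_i e_i,\,f)=\max_i\min(e_i,f)$ for nonnegative integers, i.e.\ distributivity of $\min$ over $\max$; the degenerate cases in which some $b_i=0$ are absorbed by the conventions $(0,b)=b$ and $[0,b]=0$.
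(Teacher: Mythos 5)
Your proof is correct, but note that the paper does not actually prove this statement: Theorem~\ref{CRT} is quoted in the background section as a known fact about PIDs (the classical pairwise-compatibility form of the Chinese Remainder Theorem) and is then only \emph{used} later, so there is no internal proof to compare against. Your argument is the standard and complete one: necessity from $(b_i,b_j)\mid (a_j-a_i)$, sufficiency by induction with the two-congruence Bézout construction, and uniqueness modulo $[b_1,\dots,b_n]$. The genuinely nontrivial ingredient, which you correctly isolate, is the distributivity identity $\bigl([b_1,\dots,b_{n-1}],b_n\bigr)=\bigl[(b_1,b_n),\dots,(b_{n-1},b_n)\bigr]$; reducing it to $\min(\max_i e_i,f)=\max_i\min(e_i,f)$ on $p$-adic valuations is valid in a PID (a UFD), and your conventions handle zero moduli there. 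The only loose end is the degenerate case $d=(b_1,b_2)=0$ in the base step, where the division by $d$ is not defined: this forces $b_1=b_2=0$, the compatibility hypothesis then reads $a_1=a_2$, and $x=a_1$ solves the pair, so it is a one-line patch rather than a gap. With that remark added, your write-up would serve as a self-contained proof of a statement the paper leaves as cited background.
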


	\begin{figure}[h]
	\begin{center}

		\begin{tikzpicture}
			\node[main node] (1) {$m_{v_1} \mathbb{Z}$};
			\node[main node] (2) [right = 2cm of 1]  {$m_{v_2} \mathbb{Z}$};
			\node[main node] (3) [right = 2cm of 2]  {$m_{v_3} \mathbb{Z}$};
			\node[main node] (4) [right = 2cm of 3]  {$m_{v_4} \mathbb{Z}$};
			
			\path[draw,thick]
			(1) edge node [above]{$\mathbb{Z} / r_1 \mathbb{Z}$} (2)
			(2) edge node [above]{$\mathbb{Z} / r_2 \mathbb{Z}$} (3)
			(3) edge node [above]{$\mathbb{Z} /  r_3 \mathbb{Z}$} (4);
			
		\end{tikzpicture}

	\end{center}
	
	\caption{An edge-labeled path graph $(P_4,\beta)$} \label{path ex}

\end{figure}
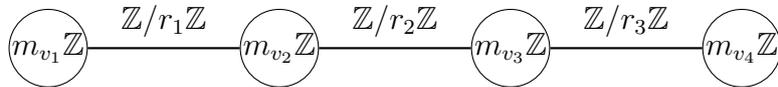

\begin{Ex}
	
	Consider the path graph $P_4$ as in Figure \ref{path ex}. 		
	   Let $(f_{v_1},f_{v_2},f_{v_3},f_{v_4}) \in \hat{R}_{P_4}$. By the spline conditions, we have the following equations: $$  f_{v_i}- f_{v_{i+1}}	\in r_i \mathbb{Z}$$ for all $i=1,2,3$. 	  
	   By solving the system of the equations together with $f_{v_j} \in m_{v_j} \mathbb{Z}$ at each vertex $v_j$, we proceed inductively as follows:
	  \begin{gather*}
	  	f_{v_{3}} - f_{v_4} \in r_{3} \mathbb{Z} \Rightarrow  f_{v_{3}} \in  f_{v_4} + r_{3} \mathbb{Z} \subset m_{v_4}\mathbb{Z}+ r_{3} \mathbb{Z} = (r_{3},m_{v_4}) \mathbb{Z} \\
	  	\Rightarrow  f_{v_{3}} \in  [m_{v_3},(r_{3},m_{v_4})] \mathbb{Z},\\
	  	f_{v_{2}} - f_{v_{3}}\in r_{2} \mathbb{Z} \quad \text{and} \quad  f_{v_{3}} \in  [m_{v_3},(r_{3},m_{v_4})] \mathbb{Z} 
	  	\Rightarrow f_{v_{2}} \in (r_{2}, [m_{v_3},(r_{3},m_{v_4})]) \mathbb{Z} \\ \Rightarrow f_{v_{2}} \in [m_{v_2}, (r_{2}, [m_{v_3},(r_{3},m_{v_4})])]\mathbb{Z}.
	 	  \end{gather*}
Additionally, we obtain the system of congruences:

	  \begin{align}\label{exist2:crt}
	  	\begin{split}
	  	f_{v_{2}} &\equiv  0 \mod [m_{v_2},(m_{v_3},r_{2}),(m_{v_4},r_{2},r_{3})]\\
	  	f_{v_{2}} &\equiv f_{v_1}\mod r_1.
	  	\end{split}
	  \end{align}
  	 By the Chinese Remainder Theorem, this system has a solution for $f_{v_{2}}$ if and only if 
	  \[	f_{v_1}\equiv 0 \mod ( r_1,[m_{v_2},(m_{v_3},r_{2}),(m_{v_4},r_{2},r_{3})]).\]
	Equivalently, we have
	  \begin{equation}\label{exist1:crt}
	  	f_{v_1}\equiv 0 \mod [m_{v_1}, (r_{1}, [m_{v_2}, (r_{2}, [m_{v_3},(r_{3},m_{v_4})])])].
	  \end{equation}
	If we choose $f_{v_1}$ satisfying the system in (\ref{exist1:crt}) the Chinese Remainder Theorem guarantees the existence of a solution for $f_{v_{2}}$ satisfying the system in (\ref{exist2:crt}). By continuing this process backward, we establish the existence of $f_{v_{j}}$ for all $j=3,4$. Hence, there exists a spline $(f_{v_1},f_{v_2},f_{v_3},f_{v_4})$.

\end{Ex}

Here, we showed the existence of a spline  using Chinese Remainder Theorem.

\section{Graph Properties}

Gilbert, Polster and Tymoczko \cite{GPT} focused on the question of whether there is a generalized spline for $R_G$ when we take a spline in any subgraph $G^{'}$ of $G$. They gave the answer to this question in Theorem 5.1 \cite{GPT}. The same result works here in a similar way. Here we will also focus on other questions, such as if we take the map $\hat{R}_G \to \hat{R}_{G^{'}}$,  is this map well defined, what is its kernel, what can we comment on this map?

\begin{Th} \label{subth}
	Let  $(G,\beta)$  be an edge-labeled graph. Fix subgraph $(G^{'},\beta_{\mid G^{'}})$ of $(G,\beta)$. Let $F^{'}$ be a spline for $\hat{R}_{G^{'}}$. We define $a = \prod_{N_i}  r_{v_iv_j} $ where  $$N_{i}= \{r_{v_iv_j} : v_iv_j \in E(G- G^{'}) \: \text{and } \: v_i \in V(G^{'}) \: \text{or } \: v_j \in V(G^{'}) \}.$$ 
	Then the vector $F$ defined by
	$$
	f_{v_i} = \left\{
	\begin{array}{c}
		a f_{v_i}^{'} \quad \text{if} \quad  v_i \in V(G^{'})\\
		0 \quad \quad \text{if} \quad  v_i \notin V(G^{'})
	\end{array}
	\right.
	$$	
	is a spline for $\hat{R}_G$.	
	
\end{Th}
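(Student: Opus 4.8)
The plan is to verify directly that the vector $F$ defined in the statement satisfies the spline condition $f_u - f_v \in r_{uv}\mathbb{Z}$ on every edge $uv$ of $G$, together with the vertex-membership condition $f_{v_i} \in m_{v_i}\mathbb{Z}$. I would split the edges of $G$ into three classes according to how the endpoints meet $V(G')$, and handle each class separately.

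First, the vertex condition: for $v_i \notin V(G')$ we have $f_{v_i} = 0 \in m_{v_i}\mathbb{Z}$ trivially, and for $v_i \in V(G')$ we have $f_{v_i} = a f_{v_i}'$ with $f_{v_i}' \in m_{v_i}\mathbb{Z}$ since $F'$ is a spline for $\hat R_{G'}$, so $f_{v_i} \in m_{v_i}\mathbb{Z}$ as well. Next, the edge conditions. If $e = uv$ is an edge with both endpoints in $V(G')$, then $e \in E(G')$ and $f_u - f_v = a(f_u' - f_v') \in a r_e \mathbb{Z} \subseteq r_e\mathbb{Z}$, using that $F'$ is a spline on $G'$. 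If $e = uv$ has both endpoints outside $V(G')$, then $f_u - f_v = 0 - 0 = 0 \in r_e\mathbb{Z}$. The remaining case is the one that forces the definition of $a$: if $e = v_iv_j$ has exactly one endpoint, say $v_i$, in $V(G')$, then $e \in E(G - G')$ and $r_e = r_{v_iv_j} \in N_i$, so $r_e$ is one of the factors of $a$; hence $f_{v_i} - f_{v_j} = a f_{v_i}' - 0 = a f_{v_i}'$, and since $r_e \mid a$ we get $a f_{v_i}' \in r_e\mathbb{Z}$, as required. This exhausts all edges of $G$, so $F \in \hat R_G$.

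The only subtlety — and the single place where the argument is not purely formal — is making sure that the product $a = \prod_{N_i} r_{v_iv_j}$ is indexed so that every edge of $G$ crossing from $V(G')$ to its complement contributes its label as a divisor of $a$; I would state explicitly that the union $\bigcup_i N_i$ ranges over all such crossing edges (the notation in the statement is slightly ambiguous here, since $N_i$ is written with a free index $i$ but $a$ has no visible index). Once that reading is fixed, $r_e \mid a$ for every crossing edge $e$ is immediate, and the divisibility $a r_e \mathbb{Z} \subseteq r_e \mathbb{Z}$ in the first case is trivial. I do not expect any real obstacle; the content of the theorem is exactly the observation that multiplying a spline on $G'$ by the product of the labels of all edges leaving $G'$ kills the discrepancy across those edges while preserving the spline condition internal to $G'$, and the proof is a short case check.
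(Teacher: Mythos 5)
Your overall strategy is the right one, and it is essentially the argument the paper delegates to: the paper's ``proof'' is only a citation of Theorem 5.1 in Gilbert--Polster--Tymoczko, whose content is exactly the case-by-case verification you carry out (vertex condition, internal edges, crossing edges, external edges), with the extra observation needed here that $a f_{v_i}' \in m_{v_i}\mathbb{Z}$ because $m_{v_i}\mathbb{Z}$ is closed under multiplication by $a$. Your reading of $a$ as the product of the labels of all edges of $E(G-G')$ meeting $V(G')$ (i.e.\ over $\bigcup_i N_i$) is also the intended one.

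There is, however, one genuine gap in your case analysis: you assert that if an edge $e=uv$ of $G$ has both endpoints in $V(G')$, then $e \in E(G')$, and you then invoke the spline condition of $F'$ on $e$. That implication holds only if $G'$ is an \emph{induced} subgraph, which the theorem does not assume; for a general subgraph there is a fourth class of edges, namely $e=uv \in E(G)\setminus E(G')$ with both $u,v \in V(G')$, and for such an edge $F'$ gives you no congruence at all, so the step ``$f_u-f_v=a(f_u'-f_v')\in a r_e\mathbb{Z}$'' is unjustified. This is precisely why the set $N_i$ in the statement is defined with the inclusive condition ``$v_i \in V(G')$ \emph{or} $v_j \in V(G')$'' rather than ``exactly one endpoint'': such an edge also lies in $E(G-G')$ and has (both, hence at least one) endpoints in $V(G')$, so its label $r_e$ divides $a$, and the same divisibility argument you use for crossing edges gives $f_u-f_v=a(f_u'-f_v') \in a\mathbb{Z} \subseteq r_e\mathbb{Z}$. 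Adding this fourth case (or rephrasing your trichotomy as: edges of $E(G')$, edges of $E(G-G')$ meeting $V(G')$, and edges with no endpoint in $V(G')$) repairs the proof completely.
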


\begin{proof}
	See Theorem 5.1 in \cite{GPT}.
\end{proof}

\begin{Cor}
	Let  $(G,\beta)$  be an edge-labeled graph.	If $(G,\beta)$ includes a subgraph $(G^{'},\beta_{\mid G^{'}})$ 
	for which $\hat{R}_{G^{'}}$ admits a nontrivial extending generalized
	spline, then $\hat{R}_{G}$ also contains one.
\end{Cor}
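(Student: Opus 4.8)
The plan is to deduce the corollary directly from Theorem \ref{subth}. Suppose $(G,\beta)$ contains a subgraph $(G',\beta_{\mid G'})$ such that $\hat{R}_{G'}$ admits a nontrivial spline $F'=(f'_{v_1},\dots,f'_{v_k})$; by Definition of nontriviality, at least one component $f'_{v_j}$ is nonzero. First I would apply Theorem \ref{subth} verbatim to this $F'$: set $a=\prod_{N_i} r_{v_iv_j}$ as in the statement, and define the vector $F$ on $V(G)$ by $f_{v_i}=af'_{v_i}$ for $v_i\in V(G')$ and $f_{v_i}=0$ otherwise. Theorem \ref{subth} guarantees that $F$ is a spline for $\hat{R}_G$, so it only remains to check that $F$ is nontrivial.

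The one point that needs a short argument is that $F$ is not the zero spline, i.e.\ that multiplying by $a$ does not annihilate the nonzero component $f'_{v_j}$. Since we are working over $\mathbb{Z}$ (or, in the stated generalization, over a PID $R$), $\mathbb{Z}$ is an integral domain, and $a$ is a finite product of nonzero edge labels $r_e$ — the labels $r_e$ are taken nonzero since $M_e=\mathbb{Z}/r_e\mathbb{Z}$ is intended to be a proper quotient (if some $r_e=0$ one excludes that factor, or the edge imposes the condition $f_u=f_v$ and the argument is even easier). Hence $a\neq 0$, and $af'_{v_j}\neq 0$ because $\mathbb{Z}$ has no zero divisors. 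Therefore $F$ has a nonzero component and is a nontrivial spline for $\hat{R}_G$.

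There is essentially no obstacle here: the corollary is an immediate packaging of Theorem \ref{subth} together with the observation that the scaling factor $a$ is a nonzero element of a domain. If one wanted to be careful about the degenerate possibility that $G-G'$ contributes no relevant edges, note that then $N_i=\varnothing$ and $a$ is an empty product equal to $1$, so $F$ simply extends $F'$ by zeros and is trivially nontrivial. I would write the proof in two or three sentences: invoke Theorem \ref{subth} to get that $F$ is a spline, then remark that $a\neq 0$ in the domain $\mathbb{Z}$ forces $af'_{v_j}\neq 0$, so $F$ is nontrivial, completing the proof.
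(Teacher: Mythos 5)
Your proof is correct and is essentially the paper's own argument: the corollary is presented as an immediate consequence of Theorem~\ref{subth}, obtained by scaling a nontrivial spline $F'$ on $G'$ by $a=\prod_{N_i} r_{v_iv_j}$ and extending by zeros, and then observing that $a\neq 0$ in the integral domain $\mathbb{Z}$ preserves the nonzero entry. Your parenthetical about a possible $r_e=0$ is immaterial here (and the clause ``one excludes that factor'' would not actually be justified, since the spline condition $af'_{v_i}\in r_e\mathbb{Z}=\{0\}$ could not simply be waived), but the paper's setup tacitly works with nonzero $r_e$ --- note that its ``zero-module'' edge labels correspond to $r_e$ a unit, not $r_e=0$ --- so the main line of your argument matches the intended proof.
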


\begin{Th}\label{submap}
	Let  $(G,\beta)$  be an edge-labeled graph with $n$ vertices and $(G^{'},\beta_{\mid G^{'}})$ be a subgraph of $(G,\beta)$ with $n-1$ vertices.
	The map $\psi : \hat{R}_{G} \to \hat{R}_{G^{'}} $ defined by $$\psi(f_{v_1},\dots,f_{v_{n-1}},f_{v_n}) = (f_{v_1},\dots,f_{v_{n-1}})$$ is a  $\mathbb{Z}$-module
	homomorphism with kernel $  \hat{\mathcal{F}}_{n}$. In general, this map is not surjective.
	
\end{Th}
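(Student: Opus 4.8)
The plan is to verify the three assertions in turn: that $\psi$ is $\mathbb{Z}$-linear, that $\ker\psi = \hat{\mathcal F}_n$, and that $\psi$ need not be surjective.

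First I would check that $\psi$ is well defined and a $\mathbb{Z}$-module homomorphism. Since $G'$ is obtained from $G$ by deleting the single vertex $v_n$ (together with its incident edges), every edge of $G'$ is an edge of $G$, so if $F=(f_{v_1},\dots,f_{v_n})\in\hat R_G$ satisfies $\varphi_u(f_u)=\varphi_v(f_v)$ for all $uv\in E(G)$, then in particular this holds for all $uv\in E(G')$; hence $\psi(F)=(f_{v_1},\dots,f_{v_{n-1}})\in\hat R_{G'}$, and moreover $f_{v_i}\in M_{v_i}$ for each $i\le n-1$. Linearity is immediate from the coordinatewise definition of addition and scalar multiplication on $\hat R_G$ and $\hat R_{G'}$ given in the Remark: $\psi$ is just the projection onto the first $n-1$ coordinates, so it respects both operations.

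Next I would identify the kernel. An element $F\in\hat R_G$ lies in $\ker\psi$ iff $f_{v_1}=\dots=f_{v_{n-1}}=0$, i.e. iff all entries except possibly the last vanish. By the definition of an $i$-th flow-up class (with $i=n$), such an $F$ is precisely an element of $\hat{\mathcal F}_n$ together with the trivial spline; since the convention in the paper includes the trivial spline among the flow-up classes (as in the leading-term discussion), we get $\ker\psi=\hat{\mathcal F}_n$ on the nose. I would spell out briefly that the flow-up condition ``$f_{v_s}=0$ for $s<n$'' is exactly the kernel condition, and that any such vector automatically satisfies the spline conditions on the edges incident to $v_n$ that were removed — there are no such conditions left to check in $\hat R_{G'}$, but within $\hat R_G$ they constrain $f_{v_n}$, which is fine since we only need membership in $\hat R_G$.

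Finally, for non-surjectivity I would exhibit a small counterexample rather than argue in general: take $G=P_2$ with vertices $v_1,v_2$, vertex labels $M_{v_1}=M_{v_2}=\mathbb{Z}$ and edge label $\mathbb{Z}/r\mathbb{Z}$ with $r\ge 2$, and $G'$ the single vertex $v_1$ with label $\mathbb{Z}$. Then $\hat R_{G'}=\mathbb{Z}$, but the image of $\psi$ consists of those $f_{v_1}\in\mathbb{Z}$ that extend to a spline on $P_2$; since $f_{v_1}\equiv f_{v_2}\bmod r$ with $f_{v_2}$ unconstrained, actually every $f_{v_1}$ extends here — so I would instead put a vertex module $M_{v_2}=m_{v_2}\mathbb{Z}$ with $m_{v_2}=r$, forcing $f_{v_1}\in(r,r)\mathbb{Z}=r\mathbb{Z}\subsetneq\mathbb{Z}=\hat R_{G'}$, which shows $\psi$ is not onto. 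The only mild subtlety — and the step I would be most careful about — is making sure the counterexample is consistent with the standing conventions of the paper (vertex labels of the form $m_v\mathbb{Z}$, edge labels $\mathbb{Z}/r_e\mathbb{Z}$, quotient maps), and pointing out that the obstruction is exactly the extension problem solved by the Chinese Remainder Theorem in the Example: $f_{v_1}$ in the image must be divisible by $[m_{v_1},(r_{v_1v_n},m_{v_n},\dots)]$-type quantities coming from the edge(s) at the deleted vertex.
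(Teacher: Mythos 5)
Your proposal is correct, and for the core of the theorem it follows the same route as the paper: $\psi$ is the coordinate projection, well-defined because every edge of $G'$ is an edge of $G$ and the vertex modules are unchanged under $\beta_{\mid G'}$, and the kernel is exactly the set of splines vanishing on $v_1,\dots,v_{n-1}$, i.e.\ $\hat{\mathcal{F}}_n$ (together with the trivial spline; the paper is equally loose on this point and even records the identification $\ker\psi \cong [m_{v_n},N_n]\mathbb{Z}$, where the lcm over the edge labels at $v_n$ encodes the constraint on $f_{v_n}$ that you correctly note remains inside $\hat{R}_G$). Where you genuinely diverge is the last assertion: the paper never exhibits a failure of surjectivity — it instead runs the Chinese Remainder Theorem on the congruence system at $v_n$ and records the compatibility conditions \eqref{CRT1}--\eqref{CRT2} under which $\psi$ \emph{is} surjective, leaving non-surjectivity as an unproved assertion. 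You instead produce an explicit counterexample ($P_2$ with $M_{v_1}=\mathbb{Z}$, $M_{v_2}=r\mathbb{Z}$, edge label $\mathbb{Z}/r\mathbb{Z}$, $r\ge 2$, giving image $r\mathbb{Z}\subsetneq\mathbb{Z}=\hat{R}_{G'}$), which actually completes the claim "in general this map is not surjective" in a way the paper's proof does not; the paper's CRT analysis, on the other hand, buys a positive surjectivity criterion that your argument omits. Two small remarks: your counterexample's verification is right (and it is consistent with the paper's conventions $M_v=m_v\mathbb{Z}$, $M_e=\mathbb{Z}/r_e\mathbb{Z}$), and your opening sentence assumes $G'$ is the vertex-deleted subgraph on $v_1,\dots,v_{n-1}$, whereas the theorem allows any subgraph on $n-1$ vertices — harmless, since your argument only uses $E(G')\subseteq E(G)$.
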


\begin{proof}
	Firstly, we need to show that the map $\psi$ is well-defined. Let $ F= (f_{v_1},\dots,f_{v_n}) \in  \hat{R}_{G}$. It can be easily seen that $F^{'}= (f_{v_1},\dots,f_{v_{n-1}})$ also satisfy spline condition since $(G^{'},\beta_{\mid G^{'}})$ is a subgraph of $(G,\beta)$. Hence, $\psi$ is well-defined. Since it is a projection map, it is also an $\mathbb{Z}$-module homomorphism. Now, we will find the kernel of $\psi$.
	\begin{align*}
		\ker \psi = & \{(f_{v_1},\dots,f_{v_n})  \in   \hat{R}_{G} \mid (f_{v_1},\dots,f_{v_{n-1}})=(0,\dots,0)\} \\
		= & \{F^{(n)}=(0,\dots,0,f_{v_n}) \mid f_{v_n} \in m_{v_n} \mathbb{Z}\} =\hat{\mathcal{F}}_{n} \cong [m_{v_n},N_n] \mathbb{Z}
	\end{align*}
	where $N_n= \{r_{v_nv_j} : v_nv_j \in E \: for \: j<n \}$.
	
	In general, this map is not surjective. However, under some conditions from the Theorem~\ref{CRT} (CRT), we can say that this map is surjective.
	If we take $F^{'}= (f_{v_1},\dots,f_{v_{n-1}}) \in  \hat{R}_{G^{'}}$, then  we know that $f_{v_i} \in m_{v_i}\mathbb{Z}$  for all $i$ by definition and $f_{v_j} - f_{v_k} \in r_{v_jv_k} \mathbb{Z}$ for all connected vertices $v_j$ and $v_k$ by spline conditions.  Assume that vertices $v_{i_1},\dots,v_{i_s}$ adjacent to $v_n$ with edge labels  $r_{v_nv_{i_1}},\dots,r_{v_nv_{i_s}}$, respectively. 
By Theorem~\ref{CRT} (CRT), the system
	\begin{align*}
		f_{v_n}& \equiv 0        \quad      \mod m_{v_n}  \\
		f_{v_n} &\equiv f_{v_{i_1}} \mod r_{v_nv_{i_1}}  \\
		f_{v_n} &\equiv f_{v_{i_2}} \mod r_{v_nv_{i_2}} \\
		&\vdots \\
		f_{v_n} &\equiv f_{v_{i_s}} \mod r_{v_nv_{i_s}} 
	\end{align*} 
	has a solution if and only if
	\begin{align}
		f_{v_j} &\equiv f_{v_k} \,\mod (r_{v_nv_{j}}, r_{v_nv_{k}} ) \label{CRT1} \\
		f_{v_l} &\equiv 0\quad \mod (m_{v_n},r_{v_nv_{l}}) \label{CRT2} 
	\end{align} 
	for all $j,k,l \in \{i_1,\dots,i_s\}$ with $j \neq k$. In other words, if Equations (\ref{CRT1}) and (\ref{CRT2}) have  solutions, then $f_{v_n}$ exists. Thus, under these conditions	 $\psi$ is surjective.
\end{proof}

The following results state that adding  edges or removing edges $uv$ labeled by zero modules  do not affect the set of extending generalized splines. 
\begin{Th}
	Let  $(G,\beta)$  be an edge-labeled graph. Let   $(G^{'},\beta^{'})$ be an edge-labeled graph with $V(G)= V(G^{'})$  by adding the edges  $uv \in E(G^{'}- G)$ with a zero module $\beta^{'}(uv)=  {0}$ where  $\beta={{\beta^{'}}\mid_G}$. Then $\hat{R}_{G}  = \hat{R}_{G^{'}} $.

\end{Th}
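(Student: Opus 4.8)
The plan is to show the two spline modules coincide by a direct double inclusion, exploiting the fact that the homomorphism $\varphi_v\colon M_v\to M_{uv}$ associated with a zero-labeled edge $uv$ is the zero map. First I would observe that since $V(G)=V(G')$, both $\hat R_G$ and $\hat R_{G'}$ live inside the same product $\prod_{v\in V(G)}M_v$, so it suffices to compare the defining edge conditions. The edge set of $G$ is a subset of the edge set of $G'$, and the two edge labelings agree on $E(G)$ because $\beta=\beta'\!\mid_G$; the only extra edges in $G'$ are the edges $uv\in E(G'-G)$, each carrying $\beta'(uv)=0$, i.e. $M_{uv}=0$.

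The key step is then the following: for any edge $uv$ with $M_{uv}=0$ the two homomorphisms $\varphi_u\colon M_u\to 0$ and $\varphi_v\colon M_v\to 0$ are both the zero map, so the spline condition $\varphi_u(f_u)=\varphi_v(f_v)$ reads $0=0$ and is satisfied by \emph{every} $f\in\prod_v M_v$. (In the concrete $\mathbb Z$-module setting of this paper this is the statement that $M_{uv}=\mathbb Z/r_{uv}\mathbb Z$ with $r_{uv}=0$ forces the congruence $f_u-f_v\in r_{uv}\mathbb Z=\mathbb Z$, which is vacuous; so one can run the argument either abstractly or in the $r_e$-notation.) Hence imposing the conditions coming from the edges of $E(G'-G)$ is no restriction at all.

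With that in hand the inclusions are immediate. For $\hat R_{G'}\subseteq\hat R_G$: a spline $f\in\hat R_{G'}$ satisfies $\varphi_u(f_u)=\varphi_v(f_v)$ for every edge of $G'$, in particular for every edge of the subgraph $G$, so $f\in\hat R_G$. For $\hat R_G\subseteq\hat R_{G'}$: a spline $f\in\hat R_G$ satisfies the edge conditions on all of $E(G)$, and by the key step it automatically satisfies the conditions on the remaining edges $E(G'-G)$ since those are vacuous; as $E(G')=E(G)\cup E(G'-G)$, we get $f\in\hat R_{G'}$. Combining the two inclusions gives $\hat R_G=\hat R_{G'}$, and the $\mathbb Z$-module (or $R$-module) structures agree because they are defined componentwise on the common ambient product.

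I do not foresee a genuine obstacle here; the only point requiring a little care is bookkeeping the hypotheses correctly — namely that $V(G)=V(G')$ (so the ambient product is the same), that $\beta$ and $\beta'$ agree on $E(G)$ (so the shared edge conditions are literally identical), and that every edge of $E(G'-G)$ is zero-labeled (so it contributes nothing). If one wanted to phrase this at the level of the surjective maps emphasized elsewhere in the paper, one could equivalently note that the natural projection $\hat R_{G'}\to\hat R_G$ induced by $E(G)\subseteq E(G')$ is an isomorphism, its inverse being the identity on $\prod_v M_v$; but the double-inclusion argument above is the cleanest route.
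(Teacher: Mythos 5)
Your proposal is correct and follows essentially the same double-inclusion argument as the paper: the edge conditions on $E(G)$ are shared, and each added edge of $E(G'-G)$, being labeled by the zero module, imposes a vacuous condition, so the two spline modules coincide inside the common product $\prod_v M_v$. One small slip in your parenthetical aside: the zero module $\mathbb{Z}/r_{uv}\mathbb{Z}=0$ corresponds to $r_{uv}=1$ (so the condition is $f_u-f_v\in 1\cdot\mathbb{Z}=\mathbb{Z}$, vacuous), not to $r_{uv}=0$, which would give $\mathbb{Z}/0\mathbb{Z}\cong\mathbb{Z}$ and force $f_u=f_v$; your abstract argument with the zero maps is unaffected by this.
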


\begin{proof}
	Let $ F \in  \hat{R}_{G}$ and $e=uv$ an edge.
	If  $e=uv \in E(G)$, then by the definition of a spline, $f_{u}-f_{v} \in  r_{uv} \mathbb{Z}.$ Otherwise, if $e \notin E(G)$, then  the difference $f_{u}-f_{v} \in \mathbb{Z}$. Thus, $F \in \hat{R}_{G^{'}} $. Conversely, if $F \in \hat{R}_{G^{'}} $, then $ F \in  \hat{R}_{G}$, as well, since $(G,\beta)$ is a subgraph of $(G^{'},\beta^{'})$ and the spline conditions for edges in $G$ are satisfied as part of those in $G^{'}$.

\end{proof}

\begin{Th}
	Let  $(G,\beta)$  be an edge-labeled graph.  Let $(G^{'},\beta^{'})$ be an edge-labeled subgraph of $(G,\beta)$  with $V(G)= V(G^{'})$  by removing the edges  $uv \in E(G)$ with a zero module $\beta(uv)= {0}$  where $\beta^{'}={\beta\mid_G}$. Then $\hat{R}_{G}  = \hat{R}_{G^{'}} $.
	
\end{Th}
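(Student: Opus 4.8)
The plan is to prove this the same way as the preceding theorem on adding zero-labeled edges, since the two statements are essentially converses of each other. Indeed, the situation here is that $(G',\beta')$ is obtained from $(G,\beta)$ by deleting all edges $uv$ whose label $\beta(uv)$ is the zero module, while keeping the vertex set and all vertex modules $M_v = m_v\mathbb{Z}$ fixed. So $(G',\beta')$ is a subgraph of $(G,\beta)$ with the same vertices, and the claim is that removing these particular edges does not change the spline module.

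First I would observe that, because an edge $e = uv$ with $\beta(uv) = 0$ corresponds to $M_e = \mathbb{Z}/0\mathbb{Z} \cong \mathbb{Z}$, the associated spline condition $\varphi_u(f_u) = \varphi_v(f_v)$ reads $f_u - f_v \in 0\cdot\mathbb{Z} = \{0\}$, i.e. it forces $f_u = f_v$ — wait, that is actually a nontrivial condition, so I need to be slightly careful: I should instead follow the convention used in the previous theorem, where a ``zero module'' on an edge is treated as imposing no restriction (the difference lies in $\mathbb{Z}$, the whole ring). Under that reading, the edge $uv$ with zero label imposes only $f_u - f_v \in \mathbb{Z}$, which is automatic. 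So the key step is simply: for any $F \in \prod_v M_v$, the spline conditions attached to the deleted edges are vacuous, hence $F$ satisfies all spline conditions of $(G,\beta)$ if and only if it satisfies all spline conditions of $(G',\beta')$.

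Concretely, the proof would go in two inclusions. For $\hat{R}_G \subseteq \hat{R}_{G'}$: given $F \in \hat{R}_G$, every edge of $G'$ is an edge of $G$ (with the same label, since $\beta' = \beta\mid_{G'}$), so the spline condition on each edge of $G'$ is already satisfied; hence $F \in \hat{R}_{G'}$. For $\hat{R}_{G'} \subseteq \hat{R}_G$: given $F \in \hat{R}_{G'}$, take any edge $e = uv \in E(G)$. If $e \in E(G')$, the condition $f_u - f_v \in r_{uv}\mathbb{Z}$ holds by assumption; if $e \notin E(G')$, then $e$ was one of the removed edges with $\beta(uv) = 0$, so the spline condition there is $f_u - f_v \in \mathbb{Z}$, which holds trivially. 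Hence $F \in \hat{R}_G$, and the two modules coincide.

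There is essentially no obstacle here; the only subtlety is the interpretation of a zero-module edge label, which must be handled consistently with the preceding theorem (namely, that it imposes no constraint), and one should perhaps remark that $F \in \prod_v M_v$ is exactly the same ambient product in both cases since the vertex labels are untouched. The argument is a direct mirror of the proof given just above for adding zero-labeled edges, so I would keep it equally short.
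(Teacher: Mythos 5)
Your proposal is correct and follows essentially the same two-inclusion argument as the paper: one direction from the subgraph relation, the other by noting that a zero-module edge label ($M_e = \mathbb{Z}/r_e\mathbb{Z} = 0$, i.e. $r_e\mathbb{Z} = \mathbb{Z}$) imposes only the vacuous condition $f_u - f_v \in \mathbb{Z}$. Your remark resolving the interpretation of the zero label matches the paper's convention, so nothing further is needed.
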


\begin{proof}
	If $ F \in  \hat{R}_{G}$, then $F \in \hat{R}_{G^{'}} $  since $(G^{'},\beta^{'})$ is a subgraph of $(G,\beta)$.  Conversely, let $F \in \hat{R}_{G^{'}} $. If
	$uv \in E(G^{'})\subset E(G)$, then $f_{u}-f_{v} \in   r_{uv} \mathbb{Z}.$ If  $uv \in E(G-G^{'})$  then $ F \in  \hat{R}_{G}$ since $f_{u}-f_{v} \in   \mathbb{Z}$.

\end{proof}

We cannot, in general, claim that  $\hat{R}_{G}  = \hat{R}_{G^{'}}$ when we add  missing  edges or remove edges $uv$ labeled by $\mathbb{Z}$-modules. For instance, by adding edges  $uv \in E(G^{'}- G)$ with  $\beta^{'}(uv)=  \mathbb{Z}$ where  $\beta={{\beta^{'}}\mid_G}$ we get $\hat{R}_{G^{'}}\subset \hat{R}_{G}$ since $(G,\beta)\subset (G^{'}, \beta^{'})$ is a subgraph. However, the converse generally does not hold since  $f_u=f_v$ need not hold in $ [m_u,m_v]\mathbb{Z}$ for every edge $uv$.

	\section{Graph Reduction on Extending Generalized Splines}\label{section reduction}
In \cite{RS}, Rose and Suzuki  introduced  definitions of the collapsing operations that reduce any simple graph to a single vertex, preserving edge label information with it. An explicit construction of a $\mathbb{Z}$-module basis in terms of edge labels results from this process, corresponding to a sequence of surjective maps between the associated spline modules. Under our framework, we can adapt and extend these collapsing operations to the setting of extending generalized splines.

In this section, we assume that $(G, \beta)$ is an edge-labeled graph with a module $M_u=m_u\mathbb{Z}$
at each vertex $u$ and a module $M_{uv}=\mathbb{Z}/ r_{uv}\mathbb{Z}$ at each edge $uv$, together with quotient module homomorphisms $\varphi_u: M_u\to M_{uv}$. For simplicity, we associate $r_e$ to each edge label $l_e=\beta(v_iv_j)= \mathbb{Z} / r_{e} \mathbb{Z}$.

\begin{Def}[Multiple Edge Reduction]\label{MER}
	Let  $(G,\beta)$  be an edge-labeled graph with multiple edges $e_1,e_2,\dots,e_s$ between vertices $v_i$ and $v_j$ for $i\neq j$. An edge-labeled reduced graph of $(G,\beta)$ is an edge-labeled graph $(G_{e},\beta_{e})$ with no multiple edge between $v_i$ and $v_j$ defined as follows: 
	\begin{itemize}
		 
		\item  Remove the edges $e_1,e_2,\dots,e_s$  labeled by $r_1,r_2,\ldots,r_{s}$ between $v_i$ and $v_j$;
		\item Add a new edge $e=v_iv_j$ labeled by the least common multiple $[r_1,r_2,\ldots,r_{s}]$.

	\end{itemize} 
Here, $\beta_{e}=\beta_{\mid {G_{e}}}$.

\end{Def}
We can repeat Definition~\ref{MER} as needed   to obtain an edge-labeled graph with no multiple edges.
	
The following lemma states that the reduction on multiple edges changes the graph, but does not change the corresponding spline module.

\begin{Lemma}
	Assume that  $(G,\beta)$ be an edge-labeled graph with  $e_1,e_2,\dots,e_s$ multiple edges between vertices $v_i$ and $v_j$ with labels $r_1,r_2,\dots,r_s$  respectively for fixed $i, j$ and $(G_{e},\beta_{e})$ be an edge-labeled reduced graph of $G$ by replacing multiple edges  $e_1,e_2,\dots,e_s$ to a single edge $e$ with the label $[r_1,r_2,\ldots,r_{s}]$.  Then,  $\hat{R}_{G}= \hat{R}_{G_{e}}$. 
\end{Lemma}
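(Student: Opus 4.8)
The plan is to show the two spline modules are literally equal as subsets of $\prod_v M_v$, which works because the reduction does not touch the vertex set or the vertex modules, so both $\hat{R}_G$ and $\hat{R}_{G_e}$ live in the same ambient product $\prod_v M_v$. The only thing that changes is the collection of congruence conditions coming from the edges between $v_i$ and $v_j$. So it suffices to show that, for a fixed vertex labeling $f=(f_{v_1},\dots,f_{v_n})\in\prod_v M_v$, the conjunction of the $s$ conditions $f_{v_i}-f_{v_j}\in r_k\mathbb{Z}$ for $k=1,\dots,s$ is equivalent to the single condition $f_{v_i}-f_{v_j}\in [r_1,\dots,r_s]\mathbb{Z}$; all edge conditions not joining $v_i$ and $v_j$ are identical in $G$ and $G_e$ and contribute nothing to the comparison.

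First I would fix $F\in\hat{R}_G$. For every edge $e_k$ between $v_i$ and $v_j$ the spline condition gives $f_{v_i}-f_{v_j}\in r_k\mathbb{Z}$, i.e.\ $r_k\mid (f_{v_i}-f_{v_j})$ for each $k=1,\dots,s$. By the definition of least common multiple in $\mathbb{Z}$, a common multiple of all the $r_k$ divides $f_{v_i}-f_{v_j}$ if and only if each $r_k$ divides it; hence $[r_1,\dots,r_s]\mid(f_{v_i}-f_{v_j})$, which is exactly $f_{v_i}-f_{v_j}\in[r_1,\dots,r_s]\mathbb{Z}$, the spline condition for the new edge $e$ in $G_e$. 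Since all other edges of $G_e$ are edges of $G$ with the same labels (as $\beta_e=\beta_{\mid G_e}$), $F$ satisfies every spline condition of $(G_e,\beta_e)$, so $F\in\hat{R}_{G_e}$. Conversely, take $F\in\hat{R}_{G_e}$; then $f_{v_i}-f_{v_j}\in[r_1,\dots,r_s]\mathbb{Z}$, and since $r_k\mid[r_1,\dots,r_s]$ for each $k$, we get $f_{v_i}-f_{v_j}\in r_k\mathbb{Z}$ for each $k$, recovering all $s$ original edge conditions between $v_i$ and $v_j$; the remaining conditions again agree, so $F\in\hat{R}_G$. This gives $\hat{R}_G=\hat{R}_{G_e}$ as sets, and since the $R$-module operations on both are the coordinatewise ones inherited from $\prod_v M_v$, the equality is an equality of $R$-modules.

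There is no real obstacle here; the statement is essentially the elementary fact that $\bigcap_{k=1}^s r_k\mathbb{Z}=[r_1,\dots,r_s]\mathbb{Z}$ in $\mathbb{Z}$, repackaged in the spline language. The only point deserving a sentence of care is the bookkeeping that the vertex modules $M_v=m_v\mathbb{Z}$ and the constraints $f_v\in m_v\mathbb{Z}$ are unaffected by the reduction (they are part of the ambient product, not of the edge data), and that all non-$v_iv_j$ edges carry identical labels in $G$ and $G_e$, so that the equivalence of the two families of edge conditions propagates to equality of the full spline modules. If one wished, the same argument handles the general PID case verbatim with $[\,\cdot\,]$ the least common multiple in $R$ and $r_k\mathbb{Z}$ replaced by the principal ideals $(r_k)$.
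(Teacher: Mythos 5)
Your proof is correct and follows essentially the same argument as the paper: the equivalence of the family of congruences $f_{v_i}-f_{v_j}\in r_k\mathbb{Z}$, $k=1,\dots,s$, with the single congruence modulo $[r_1,\dots,r_s]$, together with the observation that all other vertex and edge conditions are untouched. The only difference is that you treat general $s$ directly, whereas the paper writes out only the case $s=2$ ``for simplicity''.
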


\begin{proof}
For simplicity, we prove the lemma in the case where there are exactly two multiple edges between the vertices $v_i$ and $v_j$ with edge labels $r_1$ and $r_2$.	Let $(f_{v_1},f_{v_2},\dots,f_{v_n}) \in  \hat{R}_{G}$.  By assumption,  we have
	\begin{align*}
	\begin{split}
		f_{v_i} \equiv & f_{v_j} \mod r_1 \\
		f_{v_i} \equiv & f_{v_j} \mod r_2.  
	\end{split}
	\end{align*}
It then follows that $f_{v_i} \equiv  f_{v_j} \mod [r_1,r_2]$. 
Since the reduction on multiple edges affects only the edges between two adjacent vertices, all other vertices and incident edges in the graph $G$ remain the same. Therefore, 	
$ (f_{v_1},f_{v_2},\dots,f_{v_n}) \in  \hat{R}_{G_{e}}$. 

For the other direction, assume that $(g_1,g_2,\dots,g_n) \in  \hat{R}_{G_e}$. It suffices to check  the spline condition in $\hat{R}_{G}$ between two adjacent vertices with multiple edges
since all vertices in $G_e$ other than $v_i$ and $v_j$, as well as the edges  incident to these vertices, are the same in the graph $G$. So, by assumption, we have  $g_k \in m_{v_k} \mathbb{Z}$ for  $k=i,j$ and $g_i \equiv g_j \mod [r_1,r_2]$. It follows that \begin{align*}
	 g_i &\equiv g_j \mod r_1 \\
	   g_i &\equiv g_j \mod r_2
\end{align*} so the spline condition on  $\hat{R}_{G}$ is also satisfied.

\end{proof}

\begin{Def}[Vertex Reduction]\label{vertexreduction}
	Let  $(G,\beta)$  be an edge-labeled graph and $v$ a vertex of $G$. A vertex reduced graph of $(G,\beta)$ with respect to the vertex $v$ is an edge-labeled graph $(G_v, \beta_v)$ defined as follows:
	\begin{itemize}
		\item Remove the vertex $v$ and all edges incident to $v$, i.e., the edges $vv_i$. The adjacent vertices 
	 $v_i$ are retained, but each is relabeled with the module $[m_{v_i}, (m_v, r_{vv_i})]\mathbb{Z}$;
	 \item Add a new edge between each pair of vertices   adjacent to $v$;
		\item Label the new edge between $v_i$ and $v_j$ with the greatest common divisor $(r_{vv_i}, r_{vv_j})$, where $r_{vv_i}$ and $r_{vv_j}$ are the labels of the original edges incident to $v$.
	\end{itemize}

\end{Def}
As a result of the definition above, we observe that  $G_v$ is connected and $(G_{v},\beta_{v})$ may have multiple edges.

\begin{Def}[Reduced Graph]\label{Alg}
	
Let $(G,\beta)$ be an edge-labeled graph, and $v$ a vertex of $G$. A reduced graph of $(G,\beta)$ associated to $v$ (or on $v$), denoted by $(G_{\mathrm{red}}, \beta_{\mathrm{red}})$, is constructed through the following steps:

	\begin{enumerate}
		
		\item Suppose $v$ is adjacent to the vertices $v_1, \ldots, v_s$ and the edges $vv_i$ labeled by $r_i$. Perform a vertex reduction on $v$ to obtain the intermediate graph $(G_v, \beta_v)$ as described previously.
		\item  If $(G_v, \beta_v)$ contains multiple edges between any pair of vertices, apply the multiple edge reduction on it to obtain the simple edge-labeled graph with no multiple edges.
	\end{enumerate}	
	
\end{Def}
As a result, the reduced graph is a simple edge-labeled graph. We can repeatedly apply Definition~\ref{Alg} until we obtain a graph with a single vertex.
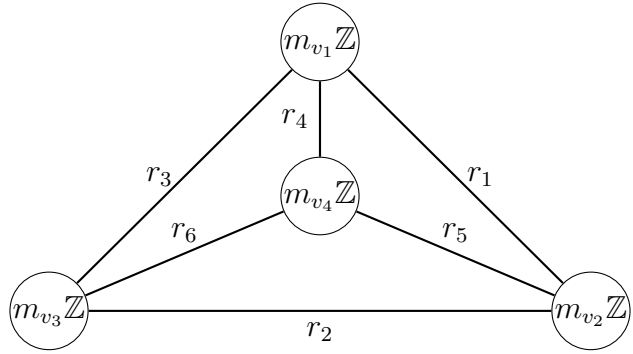
\begin{figure}[h!]
	
	\begin{center}
		\begin{tikzpicture}
			\node[main node] (1) {$m_{v_1} \mathbb{Z}$};
			\node[main node] (2) [below left = 4cm of 1]  {$m_{v_3} \mathbb{Z}$};
			\node[main node] (3) [below right = 4cm of 1] {$m_{v_2} \mathbb{Z}$};
			\node[main node] (4) [below = 1cm of 1] {$m_{v_4} \mathbb{Z}$};
			
			\path[draw,thick]
			(1) edge node [left]{$r_3 $} (2)
			(2) edge node [below]{$r_2$} (3)
			(1) edge node [left]{$r_4 $} (4)
			(2) edge node [above]{$r_6 $} (4)
			(3) edge node [above]{$r_5 $} (4)
			(3) edge node [right]{$r_1$} (1);

		\end{tikzpicture}
	\end{center}
	
	\caption{An edge-labeled complete graph $(K_4,\beta)$} \label{K4}
\end{figure}

\begin{figure}[h!]
	
	\begin{center}
		\begin{tikzpicture}
			\node[main node] (1) {$m_{v_1}^{'} \mathbb{Z}$};
			\node[main node] (2) [below left = 3cm of 1]  {$m_{v_3}^{'} \mathbb{Z}$};
			\node[main node] (3) [below right = 3cm of 1] {$m_{v_2}^{'} \mathbb{Z}$};
			
			\path[draw,thick]
			(1) edge node [left]{$r_3 $} (2)
			(2) edge node [below]{$r_2$} (3)
			(3) edge node [right]{$r_1$} (1);
			
			\path[draw, thick, bend left=20]
			(1) edge node [right]{$(r_4,r_5)$} (3);
			
			\path[draw, thick, bend right=20]
			(1) edge node [left]{$(r_4,r_6)$} (2);
			
			\path[draw, thick, bend right=20]
			(2) edge node [below]{$(r_5,r_6)$} (3);
			
		\end{tikzpicture}
	\end{center}
	
	\caption{A vertex reduced graph  $(G_{v_4},\beta_{v_4})$ on $v_4$} \label{v4red}
\end{figure}
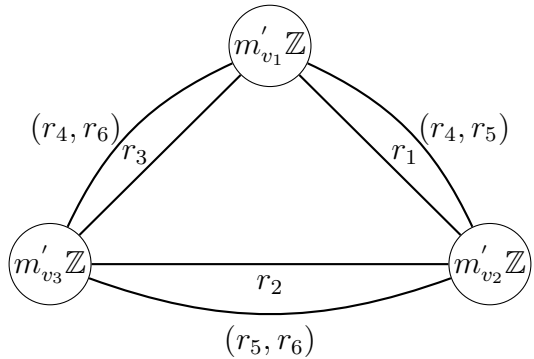

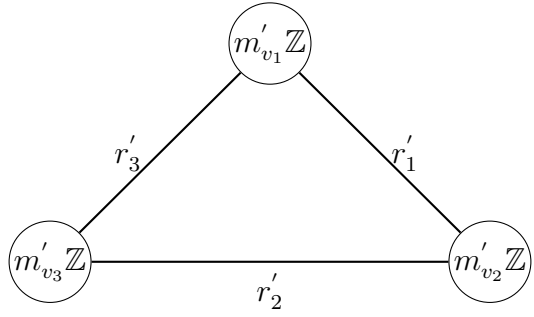
\begin{figure}[h!]
	
	\begin{center}
		\begin{tikzpicture}
			\node[main node] (1) {$m_{v_1}^{'} \mathbb{Z}$};
			\node[main node] (2) [below left = 3cm of 1]  {$m_{v_3}^{'} \mathbb{Z}$};
			\node[main node] (3) [below right = 3cm of 1] {$m_{v_2}^{'} \mathbb{Z}$};

			\path[draw,thick]
			(1) edge node [left]{$ r_3^{'}  \: $} (2)
			(2) edge node [below]{$ r_2^{'}$} (3)
			(3) edge node [right]{$ r_1^{'} $} (1);

		\end{tikzpicture}
	\end{center}
	
	\caption{An edge-labeled reduced graph $(G_{red},\beta_{red})$} \label{v4mult}
\end{figure}	

\begin{Ex}
Consider the edge-labeled complete graph $(K_4,\beta)$ as shown in Figure~\ref{K4}. We begin by removing the vertex $v_4$ along with all of its incident edges from $K_4$, while retaining its adjacent vertices $v_1, v_2$, and $v_3$ by relabeling them as follows: 
\begin{align*}
	m_{v_1}' = [m_{v_1}, (m_{v_4}, r_4)], \: m_{v_2}' = [m_{v_2}, (m_{v_4}, r_5)], \: m_{v_3}' = [m_{v_3}, (m_{v_4}, r_6)].
\end{align*}
 We then add new edges $v_1v_2$, $v_2v_3$, and $v_3v_1$ with corresponding labels $(r_4, r_5)$, $(r_5, r_6)$, and $(r_4, r_6)$, respectively, as illustrated in Figure~\ref{v4red}. Since the resulting graph $(G_{v_4}, \beta_{v_4})$ contains multiple edges, we apply the multiple edge reduction technique to obtain a simple edge-labeled graph, where the new edge labels are given by 
 \begin{align*}
 	r_1' = [r_1, (r_4, r_5)], \: r_2' = [r_2, (r_5, r_6)], \: r_3' = [r_3, (r_4, r_6)],
 \end{align*} as shown in Figure~\ref{v4mult}. This process can be repeated until we obtain a graph with a single vertex.

\end{Ex}

\begin{Lemma}\label{surj1}
	Let  $(G,\beta)$  be an edge-labeled graph and $v_i$ a vertex adjacent to $v_{i_1},\ldots,v_{i_s}$  via edges $v_iv_{i_t}$ labeled by $r_t$ for $t = 1, \ldots, s$. Let $(G_{\mathrm{red}}, \beta_{\mathrm{red}})$ be the reduced graph of $(G, \beta)$ associated with the vertex $v_i$. Then there exists a surjective homomorphism
	$$\psi:\hat{R}_{G}\rightarrow \hat{R}_{G_{red}}$$ defined by $$(f_{v_1},\ldots,f_{v_i},\ldots,f_{v_n})\mapsto (f_{v_1},\ldots,\hat f_{v_i},\ldots,f_{v_n})$$
	where $\hat{f}_{v_i}$ denotes the removal of the $i$-th coordinate.
\end{Lemma}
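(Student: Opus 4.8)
The plan is to prove that $\psi$ is a well-defined $\mathbb{Z}$-module homomorphism and that it is surjective, by constructing, for any given spline on $(G_{\mathrm{red}},\beta_{\mathrm{red}})$, a preimage whose $v_i$-coordinate is obtained from the Chinese Remainder Theorem. First I would observe that $\psi$ is just the restriction of the projection map that deletes the $i$-th coordinate, so $\mathbb{Z}$-linearity is immediate once well-definedness is established; and well-definedness amounts to checking that if $F\in\hat{R}_{G}$ then $\hat F$ (with the $v_i$-entry removed) satisfies all the spline conditions of $(G_{\mathrm{red}},\beta_{\mathrm{red}})$. The edges of $G_{\mathrm{red}}$ are of three types: edges of $G$ not incident to $v_i$ (the condition is inherited verbatim), the new edges $v_{i_p}v_{i_q}$ carrying label $(r_p,r_q)$ possibly combined by the multiple-edge reduction with an already-existing edge of that label, and the retained vertices $v_{i_t}$ whose module has shrunk from $m_{v_{i_t}}\mathbb{Z}$ to $[m_{v_{i_t}},(m_{v_i},r_t)]\mathbb{Z}$. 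So I would verify: (i) $f_{v_{i_t}}\in[m_{v_{i_t}},(m_{v_i},r_t)]\mathbb{Z}$, which follows because $f_{v_{i_t}}\in m_{v_{i_t}}\mathbb{Z}$ and $f_{v_{i_t}}\equiv f_{v_i}\equiv 0\pmod{(m_{v_i},r_t)}$ since $f_{v_i}\in m_{v_i}\mathbb{Z}$ and $f_{v_{i_t}}-f_{v_i}\in r_t\mathbb{Z}$; and (ii) $f_{v_{i_p}}\equiv f_{v_{i_q}}\pmod{(r_p,r_q)}$, which follows by subtracting the two congruences $f_{v_{i_p}}\equiv f_{v_i}\pmod{r_p}$ and $f_{v_{i_q}}\equiv f_{v_i}\pmod{r_q}$ and reducing modulo the gcd. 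When the multiple-edge reduction merges the new edge with a pre-existing one of label $r$, the condition becomes $f_{v_{i_p}}\equiv f_{v_{i_q}}\pmod{[r,(r_p,r_q)]}$, which holds because both congruences modulo $r$ and modulo $(r_p,r_q)$ hold, exactly as in the multiple-edge-reduction lemma.

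For surjectivity, given $G^{*}=(g_{v_1},\ldots,\hat g_{v_i},\ldots,g_{v_n})\in\hat{R}_{G_{\mathrm{red}}}$, I would produce $g_{v_i}\in m_{v_i}\mathbb{Z}$ satisfying $g_{v_i}\equiv g_{v_{i_t}}\pmod{r_t}$ for all $t=1,\ldots,s$, and then argue that the resulting vector lies in $\hat{R}_{G}$ and maps to $G^{*}$ under $\psi$. This is a system of congruences in the PID $\mathbb{Z}$,
\begin{align*}
g_{v_i}&\equiv 0\pmod{m_{v_i}},\\
g_{v_i}&\equiv g_{v_{i_t}}\pmod{r_t}\quad(t=1,\ldots,s),
\end{align*}
so by Theorem~\ref{CRT} it has a solution if and only if the pairwise compatibility conditions $0\equiv g_{v_{i_t}}\pmod{(m_{v_i},r_t)}$ and $g_{v_{i_p}}\equiv g_{v_{i_q}}\pmod{(r_p,r_q)}$ all hold. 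But these are precisely the spline conditions that $G^{*}$ satisfies on $(G_{\mathrm{red}},\beta_{\mathrm{red}})$: the relabeled vertex $v_{i_t}$ forces $g_{v_{i_t}}\in[m_{v_{i_t}},(m_{v_i},r_t)]\mathbb{Z}\subseteq(m_{v_i},r_t)\mathbb{Z}$, giving the first family, and the new edge labels $(r_p,r_q)$ (or their lcm with an old edge, which still divides into $(r_p,r_q)\mathbb{Z}$ after reduction — note $[r,(r_p,r_q)]\mathbb{Z}\subseteq(r_p,r_q)\mathbb{Z}$) give the second family. Hence a solution $g_{v_i}$ exists. Finally I would check that $G:=(g_{v_1},\ldots,g_{v_i},\ldots,g_{v_n})\in\hat{R}_{G}$: the edges of $G$ incident to $v_i$ are satisfied by construction of $g_{v_i}$; the edges not incident to $v_i$ are satisfied because those same edges appear in $G_{\mathrm{red}}$ with the same (or, if they were merged, an lcm-refined) labels and the vertex modules in $G_{\mathrm{red}}$ are submodules of those in $G$, so $g_{v_k}\in M_{v_k}$ holds in $G$ as well. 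Then $\psi(G)=G^{*}$, proving surjectivity.

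The main obstacle I anticipate is bookkeeping rather than conceptual: one must handle carefully the case where the vertex reduction creates a new edge $v_{i_p}v_{i_q}$ between two vertices that were \emph{already} adjacent in $G$, so that the subsequent multiple-edge reduction replaces the pair of labels $r_{v_{i_p}v_{i_q}}$ and $(r_p,r_q)$ by their least common multiple $[\,r_{v_{i_p}v_{i_q}},(r_p,r_q)\,]$. I would address this once and for all by the containment $[\,r_{v_{i_p}v_{i_q}},(r_p,r_q)\,]\mathbb{Z}\subseteq (r_p,r_q)\mathbb{Z}$ together with $[\,r_{v_{i_p}v_{i_q}},(r_p,r_q)\,]\mathbb{Z}\subseteq r_{v_{i_p}v_{i_q}}\mathbb{Z}$, so that a congruence modulo the lcm simultaneously yields both the original edge condition and the new-edge condition; this is exactly the argument already used in the multiple-edge-reduction lemma, invoked here verbatim. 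A secondary point to state explicitly is that $\hat f_{v_i}$ really does land in $\hat{R}_{G_{\mathrm{red}}}$ with respect to the \emph{reduced} vertex modules, which is item (i) above; once that is in place, the forward map and the CRT-based construction of preimages fit together cleanly.
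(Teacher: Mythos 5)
Your proof is correct and follows essentially the same route as the paper: check well-definedness via the congruences at the relabeled vertices $[m_{v_{i_t}},(m_{v_i},r_t)]\mathbb{Z}$ and the new edges $(r_p,r_q)$, then obtain surjectivity by constructing $g_{v_i}$ with Theorem~\ref{CRT}, whose pairwise compatibility conditions are precisely the reduced spline conditions. Your write-up is in fact slightly more complete than the paper's, which treats only vertices of degree at most two ``for simplicity'' and does not spell out the case where a new edge $v_{i_p}v_{i_q}$ gets merged by multiple-edge reduction with a pre-existing edge, both of which you handle explicitly.
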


\begin{proof}
For simplicity, assume that the vertex $v_i$ has degree at most 2. First, suppose that $v_i$ has degree 1. Then it is adjacent to exactly one vertex, say $v_j$. By definition, 
\begin{align}
	\begin{split}
		f_{v_l} \equiv & 0 \mod m_{v_l} \\
		f_{v_i} \equiv & f_{v_{j}} \mod r_{ij} \\
		f_{v_k} \equiv & f_{v_q} \mod r_{kq}
	\end{split} \label{CRTvertex2}
\end{align} 
for all vertices $v_l$ and all other edges $v_kv_q$ in the graph. By Theorem~\ref{CRT} (CRT), the system in~\eqref{CRTvertex2} has a solution if and only if	
\begin{align}
	\begin{split}
		f_{v_{j}} \equiv & 0 \mod [m_{v_j},(m_{v_i},r_{ij})] \\
		f_{v_k} \equiv & f_{v_q} \mod r_{kq}.
	\end{split} 
\end{align} 
Therefore, $ (f_{v_1},f_{v_2},\dots,\hat f_{v_i},\ldots,f_{v_n}) \in  \hat{R}_{G_{red}}$.

Now assume that
$v_i$ is adjacent to vertices $v_{i_1}$ and $v_{i_2}$, with edges $v_iv_{i_1},v_iv_{i_2}$ labeled by $r_{1}$ and $r_{2}$, respectively.	Let $(f_{v_1},f_{v_2},\dots,f_{v_n}) \in  \hat{R}_{G}$. By definition, 
	\begin{align}
	\begin{split}
		f_{v_l} \equiv & 0 \mod m_{v_l} \\
		f_{v_i} \equiv & f_{v_{i_1}} \mod r_1 {\text {  and  } }	f_{v_i} \equiv  f_{v_{i_2}} \mod r_1 \\
		f_{v_k} \equiv & f_{v_q} \mod r_{kq}
	\end{split} \label{CRTvertex1}
\end{align} 
 for all $l$ and other edges $v_kv_q$. 
	  By Theorem \ref{CRT} (CRT), the system in (\ref{CRTvertex1})
 has a solution if and only if	
	\begin{align}
	\begin{split}
		f_{v_{i_t}} \equiv & 0 \mod [m_{v_{i_t}},(m_{v_i},r_t)] {\text { for }} t=1,2 \\
		f_{v_{i_1}} \equiv & f_{v_{i_2}} \mod (r_{1},r_{2})	\\
		f_{v_k} \equiv & f_{v_q} \mod r_{kq}.
	\end{split} 
\end{align} 
Since the vertex reduction affects only the neighborhood of 
 $v_i$,  the other vertices and edge conditions in the graph remain unchanged. Thus, 	$ (f_{v_1},f_{v_2},\dots,\hat f_{v_i},\ldots,f_{v_n}) \in  \hat{R}_{G_{v_i}}$, and so the map 	\begin{align*}
 	\psi:\hat{R}_{G}\rightarrow \hat{R}_{G_{red}}, & \quad (f_{v_1},\ldots,f_{v_i},\ldots,f_{v_n})\mapsto (f_{v_1},\ldots,\hat f_{v_i},\ldots,f_{v_n})
 \end{align*}
 is well-defined. It is clearly a   $\mathbb Z$-module homomorphism.

To see that $	\psi$ is surjective, let $(g_1,g_2,\dots,\hat g_i,\ldots,g_n) \in  \hat{R}_{G_{red}}$. 
Then, by the definition of the reduced spline module, we have:
\begin{align*}
	g_{i_t} \equiv & 0 \mod [m_{v_{i_t}},(m_{v_j},r_t)] \quad  \text{for} \:  t=1,2 \\ g_{i_1} \equiv & g_{i_2} \mod (r_1,r_2) \\ 
	 g_{k} \equiv & g_{q} \mod r_{kq} \quad  \text{for all other edges} \: v_kv_q.
\end{align*}
From this, it follows that for each $t=1,2$,
\begin{align*}
	 g_{i_t} &\equiv 0 \mod m_{v_{i_t}}  \\
 g_{i_t} &\equiv 0 \mod (m_{v_i},r_t).
\end{align*} 
By Theorem \ref{CRT} (CRT), there exists an element $g_i \in \mathbb{Z}$ such that 
\begin{align*}
	g_i &\equiv g_{i_t}\mod r_t \\
	 g_i &\equiv 0 \mod m_{v_i}.
\end{align*}
  Since the vertex $v_i$ is adjacent to $v_{i_t}$ via edges labeled by 
  $r_{t}$, and all other vertices and incident edges in   $G_{red}$ are the same as those in $G$, the spline condition is satisfied. Therefore, $(g_1,g_2,\dots, \hat g_i,\ldots,g_n) \in  \hat{R}_G$. 
\end{proof}

\begin{Def}[Zero Vertex Reduction]\label{zerovertexreduction}
	Let  $(G,\beta)$  be an edge-labeled graph and $F=(f_{v_1},\ldots,f_{v_n})$ a spline such that $f_{v_i}=0$ for  a unique index $i$. We define a new edge-labeled graph $(G_{zred},\beta_{zred})$, called the reduced graph associated to the zero-labeled vertex $v_i$, as follows: 
	\begin{itemize}
		\item  Remove the vertex $v_i$  and all  edges $v_iv_{i_t}$ incident to it;
		\item Retain the adjacent vertices, and relabel their vertex labels as  $[m_{v_i},r_{v_iv_{i_t}}]\mathbb{Z}$,  reflecting the interaction with the removed vertex.
	\end{itemize}
	We denote this new graph by  $(G_{zred},\beta_{zred})$.
\end{Def}

This definition can be repeated on zero-labeled vertices successively. Note that the resulting graph $(G_{zred},\beta_{zred})$ is not necessarily connected.

\begin{Lemma}\label{remove}
	Let  $(G,\beta)$  be an edge-labeled graph and $F=(f_{v_1},\dots,f_{v_i},\dots, f_{v_n})$  a spline with $f_{v_i}= 0$ for a unique index $i$.  Let  $(G_{zred},\beta_{zred})$ and $(G_{red},\beta_{red})$ be a reduced graph associated to a zero-labeled vertex $v_i$ and a vertex $v_i$ together with adjacent vertices $v_{i_t}$ of $v_i$ with labels $[m_{v_{i_t}},r_{i_t}]$ respect to $m_{v_i}=0$.  Then, 	$$(f_{v_1},\dots,f_{v_i},\dots, f_{v_n}) \in \hat{R}_G \Leftrightarrow (f_{v_1},\dots,f_{v_{i-1}},f_{v_{i+1}},\dots, f_{v_n}) \in \hat{R}_{G_{zred}}$$$$\Leftrightarrow (f_{v_1},\dots,f_{v_{i-1}},f_{v_{i+1}},\dots, f_{v_n}) \in \hat{R}_{G_{red}}.$$	
	
\end{Lemma}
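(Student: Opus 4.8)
The plan is to prove the two biconditionals separately and chain them together. For the equivalence
$$(f_{v_1},\dots,f_{v_i},\dots,f_{v_n})\in\hat{R}_G \Leftrightarrow (f_{v_1},\dots,f_{v_{i-1}},f_{v_{i+1}},\dots,f_{v_n})\in\hat{R}_{G_{zred}},$$
I would argue directly from the spline conditions. For the forward direction, suppose $F\in\hat{R}_G$ with $f_{v_i}=0$. Every spline condition on an edge not incident to $v_i$ is untouched, so it suffices to check the new vertex labels on the $v_{i_t}$. From $f_{v_{i_t}}-f_{v_i}\in r_{i_t}\mathbb{Z}$ and $f_{v_i}=0$ we get $f_{v_{i_t}}\in r_{i_t}\mathbb{Z}$, and combined with $f_{v_{i_t}}\in m_{v_{i_t}}\mathbb{Z}$ this gives $f_{v_{i_t}}\in[m_{v_{i_t}},r_{i_t}]\mathbb{Z}$, which is exactly the relabeled vertex module in $G_{zred}$. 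Hence the truncated vector lies in $\hat{R}_{G_{zred}}$. For the converse, given a vector in $\hat{R}_{G_{zred}}$, set $f_{v_i}=0$; then $f_{v_{i_t}}\in[m_{v_{i_t}},r_{i_t}]\mathbb{Z}\subseteq r_{i_t}\mathbb{Z}$ ensures $f_{v_{i_t}}-0\in r_{i_t}\mathbb{Z}$, so all spline conditions incident to $v_i$ hold, and all others are inherited; thus $F\in\hat{R}_G$.

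For the second equivalence, between $\hat{R}_{G_{zred}}$ and $\hat{R}_{G_{red}}$, the idea is that when $m_{v_i}=0$ the general vertex reduction of Definition~\ref{vertexreduction} degenerates to the zero vertex reduction. Indeed, with $m_{v_i}=0$ we have $(m_{v_i},r_{v_iv_{i_t}})=r_{v_iv_{i_t}}=r_{i_t}$, so the relabeled vertex module $[m_{v_{i_t}},(m_{v_i},r_{i_t})]\mathbb{Z}$ of $G_v$ becomes precisely $[m_{v_{i_t}},r_{i_t}]\mathbb{Z}$, matching $G_{zred}$. The only difference is that $G_{red}$ also carries the new edges between each pair $v_{i_j},v_{i_k}$ labeled $(r_{i_j},r_{i_k})$, which $G_{zred}$ does not have. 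So I would show these extra edge conditions are automatically satisfied by any vector already in $\hat{R}_{G_{zred}}$: from $f_{v_{i_j}}\in r_{i_j}\mathbb{Z}$ and $f_{v_{i_k}}\in r_{i_k}\mathbb{Z}$ (which hold because the vertex labels in $G_{zred}$ are multiples of $r_{i_j}$ and $r_{i_k}$ respectively) we get $f_{v_{i_j}},f_{v_{i_k}}\in(r_{i_j},r_{i_k})\mathbb{Z}$, hence $f_{v_{i_j}}-f_{v_{i_k}}\in(r_{i_j},r_{i_k})\mathbb{Z}$. Conversely any vector in $\hat{R}_{G_{red}}$ satisfies in particular all the conditions defining $\hat{R}_{G_{zred}}$, since $G_{zred}$ is obtained from $G_{red}$ by deleting edges (and the vertex modules agree). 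This gives the set equality $\hat{R}_{G_{zred}}=\hat{R}_{G_{red}}$, which is stronger than the stated biconditional.

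Alternatively, and perhaps more cleanly, I could deduce the $\hat{R}_G\Leftrightarrow\hat{R}_{G_{red}}$ equivalence by invoking Lemma~\ref{surj1}: the map $\psi:\hat{R}_G\to\hat{R}_{G_{red}}$ is a well-defined surjective homomorphism, and when $f_{v_i}=0$ is forced (equivalently $m_{v_i}=0$), one checks the fiber over a given truncated vector is a singleton, so $\psi$ restricted appropriately is a bijection; combining with the first biconditional then yields the chain. However, since Lemma~\ref{surj1} is stated under the standing assumption that all vertex modules have the form $m_u\mathbb{Z}$ with $m_u$ possibly nonzero, the cleanest route is the direct argument above, which also makes the degeneracy $m_{v_i}=0$ explicit.

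The main obstacle I anticipate is purely bookkeeping: making sure that when $v_i$ has degree larger than two (or when $G_v$ has multiple edges requiring the further multiple-edge reduction step of Definition~\ref{Alg}), the new edge labels $(r_{i_j},r_{i_k})$ and their least common multiples do not impose any constraint beyond $f_{v_{i_t}}\in r_{i_t}\mathbb{Z}$ for all $t$. This follows because each $f_{v_{i_t}}$ already being divisible by $r_{i_t}$ forces divisibility by every gcd $(r_{i_j},r_{i_k})$ and hence the congruences $f_{v_{i_j}}\equiv f_{v_{i_k}}\bmod(r_{i_j},r_{i_k})$ hold trivially; one should state this once and note it covers the multiple-edge case as well. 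No genuinely hard step is expected — the content is that the zero label collapses the gcd in the vertex-reduction formula to the edge label itself, and that the induced pairwise edge conditions are vacuous in the presence of a zero vertex.
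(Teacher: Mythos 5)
Your argument is correct. Note that the paper itself does not prove this lemma; it simply cites Proposition 6.6 of \cite{DA}, so your self-contained verification is a genuine addition rather than a restatement of the paper's route. The two key observations you make are exactly the right ones: (i) with $f_{v_i}=0$ the incident edge conditions $f_{v_{i_t}}-f_{v_i}\in r_{i_t}\mathbb{Z}$ collapse to $f_{v_{i_t}}\in r_{i_t}\mathbb{Z}$, which together with $f_{v_{i_t}}\in m_{v_{i_t}}\mathbb{Z}$ is equivalent to membership in $[m_{v_{i_t}},r_{i_t}]\mathbb{Z}$, the relabeled module of $G_{zred}$; and (ii) when $m_{v_i}=0$ the vertex-reduction label $[m_{v_{i_t}},(m_{v_i},r_{i_t})]$ degenerates to $[m_{v_{i_t}},r_{i_t}]$, while the added edge conditions modulo $(r_{i_j},r_{i_k})$ are vacuous because each $f_{v_{i_t}}$ is already divisible by $r_{i_t}$, hence by every gcd $(r_{i_j},r_{i_k})$. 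The only imprecision is your claim that $G_{zred}$ is obtained from $G_{red}$ ``by deleting edges'': if $G$ already had an edge $v_{i_j}v_{i_k}$ with label $r$, the multiple-edge reduction replaces it in $G_{red}$ by an edge labeled $[r,(r_{i_j},r_{i_k})]$, so the edge sets do not literally nest. This does not break the argument: the $G_{red}$ condition modulo $[r,(r_{i_j},r_{i_k})]$ trivially implies the $G_{zred}$ condition modulo $r$, and in the other direction $f_{v_{i_j}}-f_{v_{i_k}}$ lies in $r\mathbb{Z}\cap(r_{i_j},r_{i_k})\mathbb{Z}=[r,(r_{i_j},r_{i_k})]\mathbb{Z}$ by exactly the divisibility observation you already state in your final paragraph; writing that sentence explicitly closes the gap.
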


\begin{proof}
	See  Proposition 6.6 in \cite{DA}.
	
\end{proof}

\begin{figure}[h!]	
	\begin{center}
		\begin{tikzpicture}
			\node[main node] (1) {$m_{v_1} \mathbb{Z}$};
			\node[main node] (2) [right = 2cm of 1]  {$m_{v_2} \mathbb{Z}$};
			\node[main node] (3) [right = 2cm of 2]  {$m_{v_3} \mathbb{Z}$};
			\node[right=.1cm of 3] {$\dots$};
			
			\node[main node] (4) [right = 1cm of 3]  {$m_{v_{n-2}} \mathbb{Z}$};
			\node[main node] (5) [right = 2cm of 4]  {$m_{v_{n-1}} \mathbb{Z}$};
			\node[main node] (6) [right = 2cm of 5]  {$m_{v_n} \mathbb{Z}$};

			\path[draw,thick]
			(1) edge node [above]{$ r_1$} (2)
			(2) edge node [above]{$ r_2 $} (3)

			(4) edge node [above]{$  r_{n-2} $} (5)
			(5) edge node [above]{$  r_{n-1} $} (6);

			\draw [out=35,in=30,looseness=0.5] (1) to node[ above]{$  r_n $}  (6);

		\end{tikzpicture}
	\end{center}
	
	\caption{An edge-labeled cycle graph $(C_n,\beta)$} \label{excn}
\end{figure}
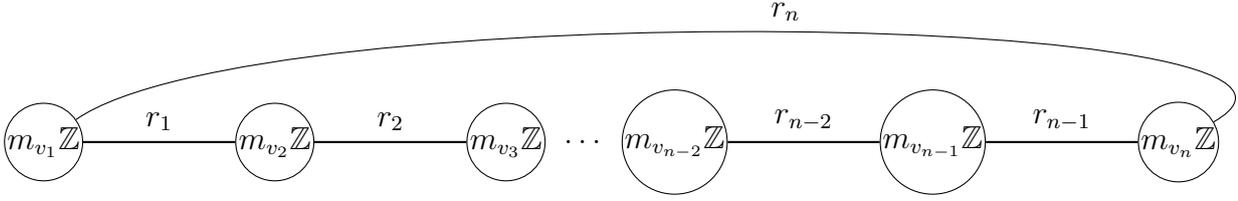

\begin{Ex}
	
	Let $(C_n,\beta)$ be an edge-labeled cycle graph  as shown in Figure \ref{excn}, where $m_{v_j}$ and $r_k$ arbitrary integers for $j,k=1,2,\ldots,n$. Let $F=(f_{v_1},f_{v_2},\dots,f_{v_i},\dots,f_{v_n})$   be spline such that  $f_{v_i}$ labeled by zero for a unique index $i$. Then the reduced graph associated to the zero-labeled vertex $v_i$ is obtained by removing $v_i$ and its incident edges, and relabeling the adjacent vertices $v_{i-1}$ and $v_{i+1}$ by: \begin{align*}
		M_{v_{i-1}}^{'}& =m_{v_{i-1}}^{'} \mathbb{Z} =[r_{i-1},m_{v_{i-1}}]  \mathbb{Z} \\
		M_{v_{i+1}}^{'}& =m_{v_{i+1}}^{'} \mathbb{Z} =[r_{i},m_{v_{i+1}}]
	\end{align*}  where $e_{v_iv_j} \in E(C_n)$ for $j=i-1,i+1$. The resulting graph is a path graph.

\end{Ex}

\begin{Th} \label{surjn}
	Let  $(G,\beta)$  be an edge-labeled graph with $n$ vertices and $(G_{red}, \beta_{red})$ be the reduced graph on a vertex $v_n$.  The map $\psi : \hat{R}_{G} \to \hat{R}_{G_{red}} $ defined by $$\psi(f_{v_1},\dots,f_{v_{n-1}},f_{v_n}) = (f_{v_1},\dots,f_{v_{n-1}})$$ is a surjective $\mathbb{Z}$-module
homomorphism with kernel  $ \hat{\mathcal{F}}_n\cup (0,\ldots,0)$ of rank 1.

\end{Th}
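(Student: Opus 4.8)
The plan is to combine the two previously established facts: Lemma~\ref{surj1}, which gives surjectivity of the reduction map $\psi:\hat R_G\to\hat R_{G_{red}}$ for a vertex reduction on $v_n$ (followed, if needed, by multiple-edge reductions, which by the earlier lemma do not change the spline module), and Theorem~\ref{submap}, whose proof already computes the kernel of the projection that forgets the last coordinate. So first I would observe that $\psi$ here is precisely the composite of the projection $\hat R_G\to\hat R_{G'}$ (where $G'=G-v_n$ as an abstract vertex set, but with the reduced edge/vertex labels recording the interaction with $v_n$) with the identification $\hat R_{G'}=\hat R_{G_{red}}$; the fact that the target is $\hat R_{G_{red}}$ and not merely some projection of $\hat R_G$ is exactly the content of Lemma~\ref{surj1}. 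Thus well-definedness, the $\mathbb Z$-module homomorphism property, and surjectivity are all immediate from Lemma~\ref{surj1}, and nothing new needs to be proved there.

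Next I would compute the kernel. An element $F=(f_{v_1},\dots,f_{v_{n-1}},f_{v_n})\in\ker\psi$ must have $f_{v_i}=0$ for all $i<n$, so $F=(0,\dots,0,f_{v_n})$ with $f_{v_n}\in m_{v_n}\mathbb Z$ and, for every edge $v_nv_j$, $f_{v_n}\equiv f_{v_j}=0\bmod r_{v_nv_j}$, i.e.\ $f_{v_n}\in r_{v_nv_j}\mathbb Z$. Hence $f_{v_n}$ ranges exactly over $[m_{v_n},N_n]\mathbb Z$ where $N_n=\{r_{v_nv_j}:v_nv_j\in E\}$, which is the same computation as in the proof of Theorem~\ref{submap}. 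This submodule of $\hat R_G$ is $\hat{\mathcal F}_n\cup\{(0,\dots,0)\}$ (the nonzero elements are genuine $n$-th flow-up classes, and the zero spline is thrown in to make it a submodule), and it is isomorphic as a $\mathbb Z$-module to $[m_{v_n},N_n]\mathbb Z\cong\mathbb Z$ (assuming $[m_{v_n},N_n]\neq 0$; if it were $0$ the kernel would be trivial and of rank $0$, but under the standing conventions $m_{v_n}\neq 0$, so rank exactly $1$). That establishes the rank-$1$ claim.

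The only mild subtlety — and the place I would be most careful — is bookkeeping about what "$G_{red}$ on $v_n$" means when $v_n$ has degree $\ge 2$: the vertex reduction relabels each neighbor $v_j$ by $[m_{v_j},(m_{v_n},r_{v_nv_j})]\mathbb Z$ and inserts edges $(r_{v_nv_j},r_{v_nv_k})$ between neighbors, possibly creating multiple edges that then get merged by the least-common-multiple rule. I would note that Lemma~\ref{surj1} has already verified that $\psi$ lands in (and surjects onto) this reduced module through exactly this two-step construction, so I may invoke it verbatim; the forgetful-projection description $\psi(f_{v_1},\dots,f_{v_n})=(f_{v_1},\dots,f_{v_{n-1}})$ agrees with the map $(f_{v_1},\dots,f_{v_n})\mapsto(f_{v_1},\dots,\hat f_{v_n},\dots)$ of Lemma~\ref{surj1} because $\hat f_{v_n}$ is the last coordinate. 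Once that identification is stated, the proof is a two-line assembly: surjectivity from Lemma~\ref{surj1}, kernel from the explicit description above, rank from $\hat{\mathcal F}_n\cup\{0\}\cong[m_{v_n},N_n]\mathbb Z$. I do not anticipate a genuine obstacle, only the need to phrase the kernel as "$\hat{\mathcal F}_n$ together with the trivial spline" so that it is literally a submodule and the stated rank makes sense.
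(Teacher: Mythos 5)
Your proposal is correct and follows essentially the same route as the paper: surjectivity, well-definedness, and the homomorphism property are delegated to Lemma~\ref{surj1}, and the kernel is computed directly as $\hat{\mathcal{F}}_n\cup\{(0,\dots,0)\}\cong[m_{v_n},N_n]\mathbb{Z}$, giving rank~$1$. Your added remarks (the explicit edge conditions $f_{v_n}\in r_{v_nv_j}\mathbb{Z}$ in the kernel computation and the degenerate case $[m_{v_n},N_n]=0$) are slightly more careful than the paper's write-up but do not change the argument.
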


\begin{proof} It is sufficient to prove that $\ker \psi=\hat{\mathcal{F}}_n\cup (0,\ldots,0)$ and the rest of proof follows from Lemma~\ref{surj1}. To find the kernel of $\psi$,
	\begin{align*}
	\ker \psi =  &\{(f_{v_1},\dots,f_{v_n})  \in   \hat{R}_{G} \mid (f_{v_1},\dots,f_{v_{n-1}})=(0,\dots,0)\} \\
	=& \{F^{(n)}=(0,\dots,0,f_{v_n})\mid f_{v_n} \in m_{v_n} \mathbb{Z}\} 
	=\hat{\mathcal{F}}_{n}\cup (0,\ldots,0)  \cong [m_{v_n},N_n] \mathbb{Z} 
\end{align*}
where $N_n= \{r_{v_nv_j} : v_nv_j \in E \: \text{for} \: j<n \}$. Thus, it is a submodule of rank 1.
	
\end{proof}

\begin{Cor}\label{reduction}
	Let  $(G,\beta)$  be an edge-labeled graph and  $(G_{red}, \beta_{red})$   a  reduced graph obtained by applying vertex reduction on $v_{i+1},\dots,v_n$ from $(G,\beta)$ and multiple edge reduction successively.  Define the map $\psi: \hat{R}_{G} \to \hat{R}_{G_{red}} $  by $$\psi(f_{v_1},\dots,f_{v_{i-1}},f_{v_i},\dots,f_{v_n}) = (f_{v_1},\dots,f_{v_{i}}).$$ Then, $\psi$ is a surjective $\mathbb{Z}$-module
	homomorphism with kernel $ \hat{\mathcal{F}}_{i+1} \cup \hat{\mathcal{F}}_{i+2} \cup \cdots \cup \hat{\mathcal{F}}_n\cup (0,\ldots,0).$
\end{Cor}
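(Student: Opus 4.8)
The plan is to prove Corollary~\ref{reduction} by induction on the number of vertices that have been reduced, using Theorem~\ref{surjn} as the base case and Lemma~\ref{surj1} (together with the compatibility of vertex reduction and multiple edge reduction recorded in the preceding lemma) as the inductive step. Concretely, write $G^{(n)} = G$ and let $G^{(k)}$ denote the reduced graph obtained after performing vertex reduction on $v_n, v_{n-1}, \dots, v_{k+1}$ and cleaning up multiple edges, so that $G^{(i)} = G_{\mathrm{red}}$ and each $G^{(k-1)}$ is the reduced graph of $G^{(k)}$ on the vertex $v_k$. For each $k$ from $n$ down to $i+1$ we have, by Theorem~\ref{surjn} applied to $G^{(k)}$ (whose ``last'' vertex in the relevant ordering is $v_k$), a surjective $\mathbb{Z}$-module homomorphism
$$\psi_k : \hat{R}_{G^{(k)}} \to \hat{R}_{G^{(k-1)}}, \qquad \psi_k(f_{v_1},\dots,f_{v_k}) = (f_{v_1},\dots,f_{v_{k-1}}),$$
with kernel equal to the set of $k$-th flow-up classes of $G^{(k)}$ together with the trivial spline, a submodule of rank $1$.

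First I would define $\psi$ as the composition $\psi = \psi_{i+1} \circ \psi_{i+2} \circ \cdots \circ \psi_n$; since a composition of surjective module homomorphisms is a surjective module homomorphism, $\psi$ is surjective, and its action is exactly the projection onto the first $i$ coordinates as claimed. The substance is identifying the kernel. Here I would argue that $F = (f_{v_1},\dots,f_{v_n}) \in \ker\psi$ if and only if $(f_{v_1},\dots,f_{v_i}) = (0,\dots,0)$, i.e. the first $i$ coordinates vanish. Such an $F$ lies in exactly one of the sets $\hat{\mathcal F}_{i+1}, \hat{\mathcal F}_{i+2}, \dots, \hat{\mathcal F}_n$ according to the location of its first nonzero coordinate, unless $F$ is the trivial spline; conversely every element of each $\hat{\mathcal F}_j$ with $j \ge i+1$ has its first $i$ coordinates zero and hence is killed by $\psi$. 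This gives $\ker\psi = \hat{\mathcal F}_{i+1} \cup \cdots \cup \hat{\mathcal F}_n \cup \{(0,\dots,0)\}$. To make the induction bookkeeping clean I would instead verify, by descending induction on $k$, that $\ker(\psi_{k} \circ \cdots \circ \psi_n) = \hat{\mathcal F}_k \cup \cdots \cup \hat{\mathcal F}_n \cup \{(0,\dots,0)\}$: the inductive step uses that $\psi_k$ has kernel $\hat{\mathcal F}_k \cup \{0\}$ and that the full preimage under $\psi_{k+1}\circ\cdots\circ\psi_n$ of a spline in $\hat R_{G^{(k)}}$ with vanishing first $k-1$ coordinates consists of splines on $G$ with vanishing first $k-1$ coordinates, which is precisely $\hat{\mathcal F}_k \cup \cdots \cup \hat{\mathcal F}_n \cup \{0\}$.

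The main obstacle I anticipate is not the algebra of compositions but the care needed in the identification $\hat{\mathcal F}_j$ \emph{of which graph}: a priori $\hat{\mathcal F}_j$ could mean the $j$-th flow-up classes of $G$ or of some intermediate $G^{(k)}$, and one must check these coincide as subsets of the relevant product of modules. The point is that a spline on $G$ whose coordinates $f_{v_1},\dots,f_{v_{j-1}}$ vanish is, by Lemma~\ref{remove} (zero vertex reduction) applied repeatedly to $v_n,\dots,v_{j}$ — or directly by Lemma~\ref{surj1} — in natural correspondence with a spline on $G^{(j-1)}$ with the same property, and the leading-term ideal is preserved under these reductions; this is exactly what lets the kernels telescope. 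I would also note explicitly that the union in the kernel is a genuine union of submodules rather than a direct sum, and that adding $(0,\dots,0)$ is only needed because each $\hat{\mathcal F}_j$ as defined excludes the trivial spline. Once these identifications are in place the proof is a routine induction, so I would keep the write-up short, citing Theorem~\ref{surjn} and Lemma~\ref{surj1} for the per-step statements and spending the few remaining lines on the kernel computation.
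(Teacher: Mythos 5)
Your proposal is correct and follows essentially the same route as the paper, whose proof is simply to apply Theorem~\ref{surjn} successively to the intermediate reduced graphs; your composition $\psi=\psi_{i+1}\circ\cdots\circ\psi_n$ and the telescoping kernel computation just make that one-line argument explicit. The extra care you take in identifying the flow-up classes of the intermediate graphs with those of $G$ is a reasonable elaboration, not a departure from the paper's approach.
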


\begin{proof}
	
	The proof follows from the theorem by applying succesively.
\end{proof}

\begin{Lemma}\label{image}
		Let  $(G,\beta)$  be an edge-labeled graph  and $(G_{red}, \beta_{red})$   a  reduced graph obtained by applying vertex reduction on $v_{i+1},\dots,v_n$ from $(G,\beta)$ and multiple edge reduction successively.  Define the map $\psi: \hat{R}_{G} \to \hat{R}_{G_{red}} $  by $$\psi(f_{v_1},\dots,f_{v_{i-1}},f_{v_i},\dots,f_{v_n}) = (f_{v_1},\dots,f_{v_{i}}).$$ Then the image of the  $\mathbb{Z}$-module
		homomorphism is  $   \hat{\mathcal{F}}_{1} \cup \hat{\mathcal{F}}_{2} \cup \cdots \cup \hat{\mathcal{F}}_i\cup (0,\ldots,0).$
\end{Lemma}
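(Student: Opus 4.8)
The plan is to obtain this statement almost immediately from the surjectivity of $\psi$ already proved in Corollary~\ref{reduction}, combined with one elementary set-theoretic observation about spline modules: on any graph with $i$ vertices, every spline is either the trivial one or lies in exactly one of the flow-up sets $\hat{\mathcal{F}}_1,\dots,\hat{\mathcal{F}}_i$ of that module. In other words, I would first argue that $\im\psi=\hat{R}_{G_{red}}$ and then that $\hat{R}_{G_{red}}=\hat{\mathcal{F}}_{1}\cup\hat{\mathcal{F}}_{2}\cup\cdots\cup\hat{\mathcal{F}}_{i}\cup(0,\ldots,0)$, where the flow-up sets are understood as subsets of the \emph{reduced} module $\hat{R}_{G_{red}}$. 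No new hard analysis is needed here: the Chinese Remainder Theorem preimage construction that does the real work has already been carried out in Lemma~\ref{surj1} and packaged into Corollary~\ref{reduction}.

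For the inclusion $\im\psi\subseteq\hat{\mathcal{F}}_{1}\cup\cdots\cup\hat{\mathcal{F}}_{i}\cup(0,\ldots,0)$, I would take an arbitrary $G'\in\im\psi$, so $G'=(g_{v_1},\ldots,g_{v_i})\in\hat{R}_{G_{red}}$ is a spline on the $i$ vertices of $G_{red}$. If $G'=(0,\ldots,0)$ we are done; otherwise let $j$ be the least index with $g_{v_j}\neq 0$. Then $g_{v_s}=0$ for every $s<j$ while $g_{v_j}\neq 0$, so by the definition of a flow-up class $G'\in\hat{\mathcal{F}}_{j}$, and $j\le i$ because $G'$ has only $i$ coordinates. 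This in fact shows the stronger equality $\hat{R}_{G_{red}}=\hat{\mathcal{F}}_{1}\cup\cdots\cup\hat{\mathcal{F}}_{i}\cup(0,\ldots,0)$, the union being disjoint since the index of the first nonzero entry is unique. For the reverse inclusion I would simply invoke Corollary~\ref{reduction}: $\psi$ is onto $\hat{R}_{G_{red}}$, and each $\hat{\mathcal{F}}_{j}$ with $j\le i$, as well as $(0,\ldots,0)$, is by definition a subset of $\hat{R}_{G_{red}}$, hence every element of $\hat{\mathcal{F}}_{1}\cup\cdots\cup\hat{\mathcal{F}}_{i}\cup(0,\ldots,0)$ is attained. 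Combining the two inclusions gives the claim.

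I do not expect a genuine obstacle; the argument is bookkeeping. The only point requiring a little care is notational: the flow-up sets $\hat{\mathcal{F}}_{j}$ appearing in the statement must be read as the flow-up sets of $\hat{R}_{G_{red}}$ rather than of $\hat{R}_{G}$, so that both inclusions compare subsets of the same module; once this is fixed, the lemma is essentially a restatement of the surjectivity of $\psi$ through the flow-up stratification of its codomain. If a proof independent of Corollary~\ref{reduction} were preferred, I would instead reproduce the CRT-based preimage construction of Lemma~\ref{surj1}, applied one vertex at a time from $v_n$ down to $v_{i+1}$, to build an explicit lift in $\hat{R}_{G}$ of any given element of $\hat{R}_{G_{red}}$, and then apply the same first-nonzero-entry argument as above.
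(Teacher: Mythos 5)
Your proposal is correct and follows essentially the same route as the paper: the paper's proof also classifies an image element by the index of its first nonzero entry to place it in $\hat{\mathcal{F}}_1,\dots,\hat{\mathcal{F}}_i$ or the trivial spline. You are in fact slightly more careful, since you make the reverse inclusion explicit by invoking the surjectivity of $\psi$ (Lemma~\ref{surj1}/Corollary~\ref{reduction}), a step the paper leaves implicit.
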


\begin{proof}
To find  the image of the map $\psi$, we compute:
	\begin{align*}
		\im \psi = & \{\psi(f_{v_1},\dots,f_{v_i},\dots,f_{v_n})  \mid (f_{v_1},\dots,f_{v_n}) \in \hat{R}_{G}  \} \\
		= & \{(f_{v_1},\dots,f_{v_{i}}) \mid (f_{v_1},\dots,f_{v_n}) \in \hat{R}_{G} \}.
	\end{align*}
	If $f_{v_{1}} \neq 0$, then $F= (f_{v_1},\dots,f_{v_i}) \in \hat{\mathcal{F}}_{1}  $. If $f_{v_{1}}=0$ but $f_{v_2}\neq 0$, then  $F \in  \hat{\mathcal{F}}_{2}$. Continuing this process,  we obtain that $F$ will have a maximum of $i$ zeros. Thus, the image of $\psi$ is given by: $$\im \psi =  \hat{\mathcal{F}}_{1} \cup \hat{\mathcal{F}}_{2} \cup \cdots \cup \hat{\mathcal{F}}_i \cup (0,\ldots,0).$$
	
\end{proof}

\begin{Th}\label{minimal}
	Let  $(G,\beta)$  be an edge-labeled graph and $F^{(i)}=(0,\dots,0,f_{v_i}^{(i)}, \dots, f_{v_n}^{(i)})$   a flow-up class with the first nonzero entry $f_{v_i}^{(i)} $  and $f_{v_s}^{(i)} =0$ for all $s <i$.  Define the map $\psi : \hat{R}_{G} \to \hat{R}_{G_{red}} $  by $$\psi(f_{v_1},\dots,f_{v_{i-1}},f_{v_i},\dots,f_{v_n}) = (f_{v_1},\dots,f_{v_{i}}).$$ Then $F^{(i)}$ is minimal if and only if $\psi(F^{(i)})$ is minimal.
\end{Th}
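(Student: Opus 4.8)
The plan is to reduce the statement to a single equality of ideals in $\mathbb Z$, namely $\langle LT(\hat{\mathcal F}_i)\rangle$ computed for $G$ versus for $G_{red}$. Recall that here $(G_{red},\beta_{red})$ is the reduced graph obtained by applying vertex reduction on $v_{i+1},\dots,v_n$ and then multiple edge reductions, so its vertex set is $\{v_1,\dots,v_i\}$, and by Corollary~\ref{reduction} the map $\psi$ is a surjective $\mathbb Z$-module homomorphism that simply forgets the coordinates $f_{v_{i+1}},\dots,f_{v_n}$; in particular it leaves the coordinates $f_{v_1},\dots,f_{v_i}$ untouched. The first (elementary but essential) observation I would record is that $\psi(F^{(i)})=(0,\dots,0,f^{(i)}_{v_i})$ lies in $\hat{\mathcal F}_i(G_{red})$ and that $LT(\psi(F^{(i)}))=f^{(i)}_{v_i}=LT(F^{(i)})$, since dropping the last $n-i$ coordinates changes neither the position nor the value of the first nonzero entry, and since $f^{(i)}_{v_i}\neq 0$ makes $\psi(F^{(i)})$ a genuine $i$-th flow-up class on $G_{red}$.

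Next I would introduce the submodule $M_i(G)=\{F\in\hat R_G\mid f_{v_1}=\dots=f_{v_{i-1}}=0\}$, which equals $\hat{\mathcal F}_i\cup\hat{\mathcal F}_{i+1}\cup\cdots\cup\hat{\mathcal F}_n\cup(0,\dots,0)$, and observe that $\langle LT(\hat{\mathcal F}_i(G))\rangle$ is precisely the image of $M_i(G)$ under the projection $\pi_i$ onto the $v_i$-coordinate. Defining $M_i(G_{red})$ analogously, and using that $G_{red}$ has only the vertices $v_1,\dots,v_i$, one gets $M_i(G_{red})=\hat{\mathcal F}_i(G_{red})\cup(0,\dots,0)$ with $\langle LT(\hat{\mathcal F}_i(G_{red}))\rangle=\pi_i(M_i(G_{red}))$. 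Because $\psi$ preserves the coordinates $v_1,\dots,v_i$, it maps $M_i(G)$ into $M_i(G_{red})$; and because $\psi$ is surjective (Corollary~\ref{reduction}), every element of $M_i(G_{red})$ has a $\psi$-preimage $F\in\hat R_G$, which then automatically has vanishing coordinates $v_1,\dots,v_{i-1}$ and so lies in $M_i(G)$. Hence $\psi$ restricts to a surjective map $M_i(G)\to M_i(G_{red})$, and since $\pi_i$ commutes with $\psi$ on these submodules, $\langle LT(\hat{\mathcal F}_i(G))\rangle=\pi_i(M_i(G))=\pi_i(M_i(G_{red}))=\langle LT(\hat{\mathcal F}_i(G_{red}))\rangle$.

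Finally I would chain the equivalences: $F^{(i)}$ is minimal $\iff$ $LT(F^{(i)})$ generates $\langle LT(\hat{\mathcal F}_i(G))\rangle$ $\iff$ (by the first paragraph together with the ideal equality just established) $LT(\psi(F^{(i)}))$ generates $\langle LT(\hat{\mathcal F}_i(G_{red}))\rangle$ $\iff$ $\psi(F^{(i)})$ is minimal, which is the claim. The step requiring genuine care — the main obstacle — is verifying that $\psi$ really does restrict to a \emph{surjective} map $M_i(G)\to M_i(G_{red})$; this rests on the surjectivity from Corollary~\ref{reduction} (itself obtained by iterating the single-vertex case of Lemma~\ref{surj1}) combined with the fact that any $\psi$-preimage of a spline with zero initial segment must itself have that zero initial segment, which is immediate from the explicit coordinate-forgetting form of $\psi$. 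No new computation beyond what is already in Lemma~\ref{surj1} and Corollary~\ref{reduction} is needed.
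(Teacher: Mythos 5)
Your proposal is correct and rests on the same two ingredients as the paper's own proof: $\psi$ preserves the position and value of the leading term of an $i$-th flow-up class, and every $i$-th flow-up class on $G_{red}$ lifts (via the surjectivity of Corollary~\ref{reduction}, i.e.\ the CRT argument of Lemma~\ref{surj1}) to an $i$-th flow-up class on $G$ with the same leading term. The difference is only organizational: the paper runs two contradiction arguments with divisibility witnesses, whereas you prove once that $\langle LT(\hat{\mathcal F}_i(G))\rangle=\langle LT(\hat{\mathcal F}_i(G_{red}))\rangle$ via the restricted surjection $M_i(G)\to M_i(G_{red})$ and then read off both directions of the equivalence.
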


\begin{proof}
	Let $(G_{red}, \beta_{red})$ be  the  graph reduction of $(G,\beta)$ on vertices $v_{i+1},\dots,v_n$ and $F^{(i)}$   a flow-up class  on $(G,\beta)$. It is immediate that  $\psi(F^{(i)})= (0,\dots,0,f_{v_i}^{(i)})$ also has $i-1$ leading zeros. Furthermore, since both vectors have their first nonzero entry at position $i$, we have:
	$$LT(F^{(i)})=f_{v_i}^{(i)}= LT(\psi(F^{(i)})).$$ 
	Now assume that $F^{(i)}$ is minimal but $\psi(F^{(i)})$ is not. Then there exists another element $$F=(0,\dots,0,g_{v_i}) \in \hat{R}_{G_{red}} $$
	with $g_{v_i} \neq 0$ such that $ g_{v_i} \mid f_{v_i}^{(i)} $ and $g_{v_i} \neq f_{v_i}^{(i)}$. By the definition of the reduction map and the  Theorem \ref{CRT} (CRT), we can lift $F$  to a flow-up class $G=(0,\dots,0,g_{v_i},\dots,g_{v_n}) \in \hat{R}_{G}$.  But then this contradicts the minimality of $F^{(i)}$
because $ g_{v_i} \mid f_{v_i}^{(i)} $ and $g_{v_i} \neq f_{v_i}^{(i)}$. This contradiction shows that $\psi(F^{(i)})$ must also be minimal.

Conversely, assume  $\psi(F^{(i)})$ is minimal but $F^{(i)}$ is not.
Then there exists another flow-up class
$G=(0,\dots,0,g_{v_i},\dots,g_{v_n}) \in \hat{R}_{G} $
	such that $g_{v_i} \neq 0$, $ g_{v_i} \mid f_{v_i}^{(i)} $ and $g_{v_i} \neq f_{v_i}^{(i)}$.
Applying $\psi$, we obtain $\psi(G)= (0,\dots,g_{v_i}) \in \hat{R}_{G_{red}}$, which contradicts the minimality of $\psi(F^{(i)})$.
Therefore,  $F^{(i)}$ must also be minimal.

\end{proof}

\begin{Cor}\label{invariant}
	Let  $(G,\beta)$  be an edge-labeled graph and $F^{(i)}=(0,\dots,0,f_{v_i}^{(i)}, \dots, f_{v_n}^{(i)})$  a flow-up class with the smallest first nonzero entry $f_{v_i}^{(i)}$  and $f_{v_s}^{(i)} =0$ for all $s <i$.  Then   $f_{v_i}^{(i)}$ is invariant under a graph reduction.
	
\end{Cor}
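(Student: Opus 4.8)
The plan is to deduce Corollary~\ref{invariant} directly from Theorem~\ref{minimal} together with Corollary~\ref{reduction} (or rather its building block, Theorem~\ref{surjn}). First I would fix the edge-labeled graph $(G,\beta)$ and let $(G_{red},\beta_{red})$ be the reduced graph obtained by successively reducing on the vertices $v_{i+1},\dots,v_n$, so that $G_{red}$ retains exactly the vertices $v_1,\dots,v_i$. By Corollary~\ref{reduction}, the projection map $\psi:\hat R_G\to\hat R_{G_{red}}$, $\psi(f_{v_1},\dots,f_{v_n})=(f_{v_1},\dots,f_{v_i})$, is a surjective $\mathbb Z$-module homomorphism. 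The quantity in question, $f_{v_i}^{(i)}$, is by Definition the leading term $LT(F^{(i)})$ of a minimal $i$-th flow-up class; since $\langle LT(\hat{\mathcal F}_i)\rangle$ is an ideal of $\mathbb Z$, this generator is well-defined up to a unit, i.e. up to sign, and can be taken to be the unique nonnegative generator.

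Next I would argue that "graph reduction" on the vertices above $v_i$ carries $\hat{\mathcal F}_i(G)$ bijectively onto $\hat{\mathcal F}_i(G_{red})$ in a leading-term-preserving way. Concretely, if $F^{(i)}\in\hat{\mathcal F}_i$ is a flow-up class in $\hat R_G$, then as observed in the proof of Theorem~\ref{minimal}, $\psi(F^{(i)})=(0,\dots,0,f_{v_i}^{(i)})$ still has $i-1$ leading zeros and the same leading term $f_{v_i}^{(i)}$, so $\psi$ maps $\hat{\mathcal F}_i(G)$ into $\hat{\mathcal F}_i(G_{red})$. Conversely, surjectivity of $\psi$ (Corollary~\ref{reduction}) together with the lifting argument via the Chinese Remainder Theorem shows every element of $\hat{\mathcal F}_i(G_{red})$ lifts to an element of $\hat{\mathcal F}_i(G)$ with the same leading term. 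Hence $LT(\hat{\mathcal F}_i(G))=LT(\hat{\mathcal F}_i(G_{red}))$ as ideals of $\mathbb Z$, and in particular their nonnegative generators coincide. Theorem~\ref{minimal} is exactly the statement that minimality is preserved under $\psi$, so a minimal flow-up class with smallest first nonzero entry $f_{v_i}^{(i)}$ in $\hat R_G$ is sent to one with the same first nonzero entry in $\hat R_{G_{red}}$, and vice versa.

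Finally, to get the full statement "invariant under a graph reduction" I would note that a single reduction step (Definition~\ref{Alg}, i.e. a vertex reduction possibly followed by multiple-edge reductions) on any vertex $v_j$ with $j>i$ is a special case of the above, and the multiple-edge reduction alone does not change the spline module at all (by the Lemma following Definition~\ref{MER}). So the leading-term ideal $LT(\hat{\mathcal F}_i)$, hence its generator $f_{v_i}^{(i)}$, is unchanged by any such step; iterating, $f_{v_i}^{(i)}$ is unchanged by any sequence of reductions on vertices $v_{i+1},\dots,v_n$, which is what is meant by invariance under graph reduction for the $i$-th flow-up leading term.

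The main obstacle I anticipate is not the algebra but the bookkeeping: one must check that reducing on a vertex $v_j$ with $j>i$ never disturbs the coordinates $f_{v_1},\dots,f_{v_i}$ or the edges among them — this is where the hypothesis that we only reduce on vertices with index larger than $i$ is essential — and that the ordering of vertices is respected so that "first nonzero entry at position $i$" is a meaningful, reduction-stable notion. A secondary subtlety is the unit ambiguity: since $\mathbb Z$ has units $\pm 1$, $f_{v_i}^{(i)}$ should be read as $|f_{v_i}^{(i)}|$, the nonnegative generator of $LT(\hat{\mathcal F}_i)$; with that reading the invariance is literal equality, and this is the only place where one has to be slightly careful in the statement.
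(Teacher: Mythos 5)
Your argument is correct and follows essentially the same route as the paper: the corollary is presented there as an immediate consequence of Theorem~\ref{minimal}, whose proof already records that $\psi$ preserves the position and value of the leading term and that minimality transfers in both directions (via the CRT lifting), which is exactly the content of your first two paragraphs. Your additional remarks on reducing only at vertices $v_j$ with $j>i$ and on the sign/unit ambiguity of the generator of $LT(\hat{\mathcal F}_i)$ are reasonable bookkeeping but not a departure from the paper's reasoning.
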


\begin{Ex}
	Let $(C_n,\beta)$ be an edge-labeled cycle graph  as in Figure \ref{excn} and  $$F^{(1)}=(f_{v_1}^{(1)},f_{v_2}^{(1)},\dots,f_{v_n}^{(1)})$$  a flow-up class with the smallest nonzero entry $f_{v_1}^{(1)}$ on $(C_n,\beta)$. By Theorem 6.4 in \cite{DA}, we have the smallest nonzero 
	$$f_{v_1}^{(1)}= [m_{v_1},(m_{v_2},r_1),(m_{v_3},r_1,r_2),\dots,(m_{v_n},r_1,\dots,r_{n-1}),(m_{v_n},r_n),(m_{v_{n-1}},r_{n-1},r_n),\dots,(m_{v_2},r_2,\dots,r_n)]$$
	on $(C_n,\beta)$.
	Assume that we remove vertex $v_n$ on $(C_n,\beta)$ by using a graph reduction so that $(C_n, \beta)$ transforms to a cycle graph $(G_{red}, \beta_{red})$ of $n-1$ vertices with vertex labels $m_{v_1}$ and $m_{v_{n-1}}$ replaced by  $m_{v_1}^{'} = [m_{v_1}, (m_{v_n},r_n)]$  and $m_{v_{n-1}}^{'} =  [m_{v_{n-1}},(m_{v_n},r_{n-1})]$, respectively, and an added new edge $v_{n-1}v_1$ labeled by $r_{n-1}^{'}= (r_{n-1},r_n)$.   Let $G^{(1)}=(g_1,g_2,\dots,g_{n-1})$ be a spline with the smallest nonzero entry $g_1 $ on	$(G_{red}, \beta_{red})$.  By Theorem 6.4 in \cite{DA},   $g_1$  is equal to		
	\begin{align*}
		g_1  = [m_{v_1}^{'},(m_{v_2},r_1),\dots,(m_{v_{n-1}}^{'},r_1,\dots,r_{n-2}),(m_{v_{n-1}}^{'},r_{n-1}^{'}),(m_{v_{n-2}},r_{n-2},r_{n-1}^{'}),\dots,(m_{v_2},r_2,\dots,r_{n-1}^{'})] .
	\end{align*} We replace $m_{v_1}^{'}, m_{v_{n-1}}^{'}$  by  $[m_{v_1}, (m_{v_n},r_n)]$  and $ [m_{v_{n-1}},(m_{v_n},r_{n-1})]$ in $g_1$, we obtain
	\begin{align*}
		& g_1= [m_{v_1},(m_{v_n},r_n),(m_{v_2},r_1),\dots,([m_{v_{n-1}},(m_{v_n},r_{n-1})],r_1,\dots,r_{n-2}),([m_{v_{n-1}},(m_{v_n},r_{n-1})],(r_{n-1},r_n)),\\\
		&(m_{v_{n-2}},r_{n-2},(r_{n-1},r_n)),\dots,(m_{v_2},r_2,\dots,(r_{n-1},r_n))] \\
		& = [m_{v_1},(m_{v_2},r_1),(m_{v_3},r_1,r_2),\dots,(m_{v_n},r_1,\dots,r_{n-1}),(m_{v_n},r_n),(m_{v_{n-1}},r_{n-1},r_n),\dots,(m_{v_2},r_2,\dots,r_n)] \\
		& = f_{v_1}^{(1)}.
	\end{align*}
	It follows that  $f_{v_1}^{(1)}$ is invariant under graph reduction on $v_n$. When we transform 
	$(C_n, \beta)$ into a graph with a single vertex $v_1$ by deleting $v_2,\ldots,v_{n-1}$ of $(G_{red},\beta_{red})$ successively, we observe that $f_{v_1}^{(1)}$ is invariant under this transformation.
\end{Ex}

\begin{Cor}
	Let $(G,\beta)$ be an edge-labeled graph and $F^{(i)}=(0,\dots,0,f_{v_i}^{(i)}, \dots, f_{v_n}^{(i)}) \in \hat{\mathcal{F}_i}$   a flow-up class with the smallest nonzero entry $f_{v_i}^{(i)} \neq 0$ for $i=1,2,\ldots,n$ and $f_{v_s}^{(i)} =0$ for all $s <i$. Let $(G_{red},\beta_{red})$ be the reduced graph  obtained by successively removing vertices $v_j$ for all $j=1,\dots,i-1,i+1,\dots,n$.
	Then, $(G_{red},\beta_{red})$  is a null graph with vertex $v_i$ labeled by $ m_{v_i}^*\mathbb{Z}$ such that $ f_{v_i}^{(i)}= m_{v_i}^*$.
\end{Cor}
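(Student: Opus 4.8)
The plan is to combine the structural results already established in the excerpt: Corollary~\ref{invariant} (invariance of the smallest leading entry of a flow-up class under a single vertex reduction) together with Lemma~\ref{surj1} and Corollary~\ref{reduction} (iterated vertex reductions give surjective homomorphisms on spline modules). First I would observe that since we delete \emph{all} vertices except $v_i$, the reduced graph $(G_{\mathrm{red}},\beta_{\mathrm{red}})$ has a single vertex $v_i$ and no edges, hence it is a null graph; its spline module is simply the label module $m_{v_i}^*\mathbb{Z}$ where $m_{v_i}^*$ is whatever integer the vertex reduction procedure produces as the final vertex label at $v_i$. So the content of the statement is the identification $m_{v_i}^* = f_{v_i}^{(i)}$.

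The key steps, in order: (1) Carry out the reduction in two stages so that Corollary~\ref{invariant} applies cleanly. Apply vertex reductions on $v_{i+1},\dots,v_n$ first to obtain an intermediate reduced graph $(G',\beta')$ on vertices $v_1,\dots,v_i$; by Corollary~\ref{reduction} the projection $\hat{R}_G\to\hat{R}_{G'}$ is surjective with kernel $\hat{\mathcal F}_{i+1}\cup\cdots\cup\hat{\mathcal F}_n\cup(0,\dots,0)$, and by Corollary~\ref{invariant} the smallest first nonzero entry of an $i$-th flow-up class is unchanged, i.e. the image of $F^{(i)}$ in $\hat{R}_{G'}$ is still an $i$-th flow-up class with leading term $f_{v_i}^{(i)}$, and $f_{v_i}^{(i)}$ is still the smallest such. (2) Now reduce $(G',\beta')$ on $v_1,\dots,v_{i-1}$. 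At each such step, removing a vertex $v_j$ with $j<i$ leaves $v_i$ with $j$-th coordinate still forced to $0$ for all flow-up classes, so the $i$-th flow-up class structure persists, and Corollary~\ref{invariant} again guarantees the smallest leading entry is invariant. After deleting $v_1,\dots,v_{i-1}$ we are left with the single vertex $v_i$ carrying a label module $m_{v_i}^*\mathbb{Z}$. (3) Finally, on the one-vertex graph the spline module is exactly $m_{v_i}^*\mathbb{Z}$, and an $i$-th flow-up class there is just an element of $m_{v_i}^*\mathbb{Z}$; the one with smallest nonzero entry is $m_{v_i}^*$ itself. Tracking the smallest-leading-term invariant through all reductions gives $m_{v_i}^* = f_{v_i}^{(i)}$, which is the claim.

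An alternative, more computational route is to expand the iterated vertex-label formula: each vertex reduction on a neighbor $v_k$ of $v_i$ replaces $m_{v_i}$ by $[m_{v_i},(m_{v_k},r_{v_iv_k})]$ and creates new edges; unwinding the nested least common multiples and greatest common divisors over all possible orderings should reproduce exactly the closed formula for $f_{v_i}^{(i)}$ from Theorem~6.4 of \cite{DA} (the cycle case is worked out in the preceding example). I would mention this as a sanity check but not as the main argument, since the invariance route is cleaner.

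The main obstacle I expect is a bookkeeping issue rather than a conceptual one: I must be careful that when reducing on vertices $v_j$ with $j<i$, the hypothesis ``$f_{v_i}^{(i)}$ is the \emph{smallest} first nonzero entry'' is preserved, because Corollary~\ref{invariant} is stated for the smallest-entry flow-up class and one needs the reduction map to be surjective (so that no smaller flow-up class can appear in the reduced graph and every one lifts back). This is exactly what Lemma~\ref{surj1} and Corollary~\ref{reduction} provide, so the argument goes through, but the ordering of deletions and the fact that intermediate graphs may fail to be connected (as noted after Definition~\ref{zerovertexreduction}) requires a remark: connectivity is not needed here because we are not using zero-vertex reduction but ordinary vertex reduction, and Definition~\ref{vertexreduction} explicitly produces a connected graph at each step until only $v_i$ remains.
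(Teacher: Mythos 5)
There is a genuine gap, and it sits exactly where you flagged a ``bookkeeping issue'': the removal of the low-indexed vertices $v_1,\dots,v_{i-1}$. Your step (1) (reduce on $v_{i+1},\dots,v_n$ via Lemma~\ref{surj1}, Corollary~\ref{reduction}, Theorem~\ref{minimal}/Corollary~\ref{invariant}) is fine and is the intended first half of the argument. But in step (2) you invoke Corollary~\ref{invariant} for reductions on vertices $v_j$ with $j<i$, and that corollary does not cover this case: it is a consequence of Theorem~\ref{minimal}, whose map $\psi$ only deletes the higher-indexed vertices $v_{i+1},\dots,v_n$, so no invariance statement for deleting $v_j$ with $j<i$ is available from it. Worse, your explicit choice to use the \emph{ordinary} vertex reduction of Definition~\ref{vertexreduction} on $v_1,\dots,v_{i-1}$ (you dismiss zero-vertex reduction in your last paragraph) makes the claimed identity $m_{v_i}^*=f_{v_i}^{(i)}$ false in general. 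Ordinary reduction on $v_j$ relabels a neighbor $v_k$ by $[m_{v_k},(m_{v_j},r_{v_jv_k})]$, which encodes the constraint coming from an \emph{arbitrary} value $f_{v_j}\in m_{v_j}\mathbb{Z}$; but for an $i$-th flow-up class one has $f_{v_j}=0$, which forces the stronger condition $f_{v_k}\equiv 0 \bmod r_{v_jv_k}$, i.e.\ the relabel must be $[m_{v_k},r_{v_jv_k}]$. Concretely, take $P_2$ with $m_{v_1}=m_{v_2}=1$ and edge label $r=4$, and $i=2$: the smallest leading entry of $\hat{\mathcal{F}}_2$ is $f_{v_2}^{(2)}=[1,4]=4$, while ordinary vertex reduction on $v_1$ leaves $v_2$ labeled by $[1,(1,4)]\mathbb{Z}=\mathbb{Z}$, so your $m_{v_2}^*$ would be $1\neq 4$.

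The repair is to remove the zero-valued vertices $v_1,\dots,v_{i-1}$ not by Definition~\ref{vertexreduction} but by the zero vertex reduction of Definition~\ref{zerovertexreduction}, equivalently by vertex reduction with $m_{v_j}$ replaced by $0$; Lemma~\ref{remove} then gives an exact correspondence of splines (no lifting or minimality argument is even needed at these steps, since $f_{v_j}=0$ is preserved on the nose), and the neighbor relabels $[m_{v_k},r_{v_jv_k}]$ accumulate into the correct final label, so that after step (1) the one-vertex label generates precisely the set of admissible values of $f_{v_i}$ and hence equals $f_{v_i}^{(i)}$. Your connectivity remark is also inverted: it is precisely the zero-vertex route that may disconnect the graph (or one uses the $m_{v_j}=0$ variant of Lemma~\ref{remove}, which keeps the gcd-labeled new edges and stays connected), and this is harmless because splines on disjoint unions split as direct sums. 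With that substitution your outline matches the argument the paper intends (Theorem~\ref{minimal}/Corollary~\ref{invariant} for $j>i$, Lemma~\ref{remove} for $j<i$); as written, however, step (2) proves a different and generally false identity for every $i>1$.
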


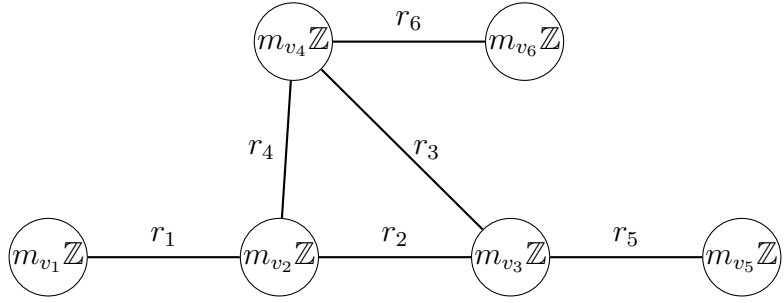
\begin{figure}[h]
	
	\begin{center}
		\begin{tikzpicture}
			
			\node[main node] (1) {$m_{v_1} \mathbb{Z}$};
			\node[main node] (2) [right = 2cm of 1]  {$m_{v_2} \mathbb{Z}$};
			\node[main node] (3) [right = 2cm of 2]  {$m_{v_3} \mathbb{Z}$};
			\node[main node] (4) [above left = 3cm of 3]  {$m_{v_4} \mathbb{Z}$};
			\node[main node] (5) [right = 2cm of 3]  {$m_{v_5} \mathbb{Z}$};
			\node[main node] (6) [right = 2cm of 4]  {$m_{v_6} \mathbb{Z}$};

			\path[draw,thick]
			(1) edge node [above]{$r_1 $} (2)
			(2) edge node [above]{$r_2 $} (3)
			(3) edge node [right]{$   r_3 $} (4)
			(2) edge node [left]{$ r_4    $} (4)
			(3) edge node [above]{$ r_5 $} (5)
			(4) edge node [above]{$  r_6 $} (6);

		\end{tikzpicture}
	\end{center}
	
	\caption{An edge-labeled graph $(G,\beta)$} \label{longest}
\end{figure}

\begin{Ex}
	Consider the edge-labeled graph $(G,\beta)$ shown in Figure \ref{longest}.  According to Theorem 6.4 in \cite{DA}, it is straightforward to verify that \[f_{v_1}^{(1)}= [a_1,a_2,a_3,a_4]\]	together with
	 \begin{align*}
		a_1 = &[m_{v_1},(m_{v_2},r_1),(m_{v_4},r_1,r_4),(m_{v_6},r_1,r_4,r_6)] \\
		a_2= &[m_{v_1},(m_{v_2},r_1),(m_{v_3},r_1,r_2),(m_{v_4},r_1,r_2,r_3),(m_{v_6},r_1,r_2,r_3,r_6)]\\
		a_3= &[m_{v_1},(m_{v_2},r_1),(m_{v_3},r_1,r_2),(m_{v_5},r_1,r_2,r_5)]\\
		a_4= &[m_{v_1},(m_{v_2},r_1),(m_{v_4},r_1,r_4),(m_{v_3},r_1,r_3,r_4),(m_{v_5},r_1,r_3,r_4,r_5)].
	\end{align*}
Now, let $(G_{red},\beta_{red})$ be the reduced graph obtained by sequentially deleting the vertices  $v_i$ for $i=2,\dots,6$ so that the resulting graph $(G_{red},\beta_{red})$ is a graph with a single vertex $v_1$, whose label is  $$m_{v_1} ^{*}=\bigg[m_{v_1},\bigg(r_1,\biggl[m_{v_2},(r_4,[m_{v_4},(m_{v_6},r_6)]),\bigg([r_2,(r_3,r_4)],[m_{v_3},(m_{v_5},r_5),(r_3,[m_{v_4},(m_{v_6},r_6)])]\bigg)\biggr]\bigg)\bigg].$$ 
It is evident from this expression that  $f_{v_1}^{(1)}$ 
is equal to
	$m_{v_1} ^{*}$.

\end{Ex}

The following theorem states a relation between an edge-labeled graph $G$ and a reduced graph $G_{red}$.

\begin{Th}
		Let  $(G,\beta)$  be an edge-labeled graph.  Define the map $\psi : \hat{R}_{G} \to \hat{R}_{G_{red}} $  by $$\psi(f_{v_1},\dots,f_{v_n}) = (f_{v_1},\dots,f_{v_{i}}).$$ Then, $\hat{R}_{G} \cong \hat{R}_{G_{red}} \oplus \ker \psi$ with a submodule $\ker \psi$  of rank 1.
\end{Th}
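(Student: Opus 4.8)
The plan is to show that the short exact sequence
\[
0 \longrightarrow \ker\psi \longrightarrow \hat{R}_{G} \xrightarrow{\ \psi\ } \hat{R}_{G_{red}} \longrightarrow 0
\]
splits, so that $\hat{R}_{G}\cong \hat{R}_{G_{red}}\oplus\ker\psi$ as $\mathbb{Z}$-modules. The surjectivity of $\psi$ is exactly Corollary~\ref{reduction} (or Theorem~\ref{surjn} when $G_{red}$ is obtained by reducing the single top vertex $v_n$), and the same results identify $\ker\psi$ with a union of flow-up classes together with the trivial spline; in the one-step case this is $\hat{\mathcal{F}}_n\cup(0,\ldots,0)$, which Theorem~\ref{surjn} already shows is a submodule isomorphic to $[m_{v_n},N_n]\mathbb{Z}$, hence free of rank $1$. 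So the only genuine work is to produce a splitting.

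The key step is to construct an explicit $\mathbb{Z}$-module section $\sigma:\hat{R}_{G_{red}}\to\hat{R}_{G}$ of $\psi$. Given $(g_{v_1},\dots,g_{v_i})\in\hat{R}_{G_{red}}$, I would lift it one vertex at a time, reversing the reduction. At each stage the relabeled vertex data in $G_{red}$ record precisely the congruence conditions that the Chinese Remainder Theorem (Theorem~\ref{CRT}) needs: in the proof of Lemma~\ref{surj1} it is shown that, for the vertex $v$ being re-introduced with neighbors $v_{i_t}$ and edge labels $r_t$, the conditions $g_{v_{i_t}}\equiv 0\bmod[m_{v_{i_t}},(m_{v},r_t)]$ and $g_{v_{i_j}}\equiv g_{v_{i_k}}\bmod(r_j,r_k)$ — which hold because $(g_{v_1},\dots,g_{v_i})$ is a spline on $G_{red}$ — are exactly the compatibility conditions guaranteeing a solution $g_{v}\in m_v\mathbb{Z}$ with $g_{v}\equiv g_{v_{i_t}}\bmod r_t$. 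The subtlety is that CRT gives $g_{v}$ only modulo $[m_v,r_1,\dots]$, so a choice must be made; I would fix, once and for all, the unique representative of this residue class lying in a fixed fundamental domain (e.g. the smallest nonnegative one, or $0$ when the ideal is all of $\mathbb{Z}$), and define $\sigma$ by using this canonical choice at every reintroduced vertex. Since the congruence system determining $g_{v}$ is $\mathbb{Z}$-linear in the entries $g_{v_{i_t}}$ and the ``reduce to the canonical representative'' operation is $\mathbb{Z}$-linear on each residue class that actually occurs (the set of attainable right-hand sides forms a submodule, and on a submodule the section of a quotient map by canonical representatives is additive), $\sigma$ is a $\mathbb{Z}$-module homomorphism; and $\psi\circ\sigma=\mathrm{id}$ by construction, since reintroducing a vertex and then deleting it returns the original coordinates.

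The main obstacle, and the step I would spend the most care on, is verifying that this choice-of-representative section is genuinely $\mathbb{Z}$-linear rather than merely set-theoretic — a naive ``pick any lift'' is additive only up to an element of the kernel. The cleanest way around this is in fact to avoid choosing at all: since $\hat{R}_{G_{red}}$ is a submodule of a free $\mathbb{Z}$-module and hence free, and $\hat{R}_{G}$ is free (it is a submodule of $\prod m_{v_j}\mathbb{Z}$, which is free of finite rank over the PID $\mathbb{Z}$), any surjection of free modules onto a free module splits automatically. Thus once $\psi$ is known to be surjective with $\hat{R}_{G_{red}}$ free, the existence of a section — and therefore the direct sum decomposition $\hat{R}_{G}\cong\hat{R}_{G_{red}}\oplus\ker\psi$ — follows formally, and the CRT-based lifting above serves as the concrete realization. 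Finally, $\ker\psi=\hat{\mathcal{F}}_{i+1}\cup\cdots\cup\hat{\mathcal{F}}_n\cup(0,\ldots,0)$ was shown in Corollary~\ref{reduction} to be a submodule, and its rank is $1$ because it consists of splines supported away from the first $i$ coordinates on the graph $G$ with $v_1,\dots,v_i$ deleted from the spline conditions, which — as in the kernel computation of Theorem~\ref{surjn} — is isomorphic to a single nonzero ideal $[m_{v_n},N_n]\mathbb{Z}$ of $\mathbb{Z}$ once all but the last reintroduced vertex are pinned to $0$; I would include a short remark reconciling the ``union of $\hat{\mathcal{F}}_j$'s'' description with rank $1$ by noting that a nontrivial element of this kernel is determined by its last nonzero coordinate, exactly as in the path/cycle examples above.
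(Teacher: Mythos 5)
Your decomposition argument is correct, but you reach it by a genuinely different route than the paper. You split the short exact sequence $0\to\ker\psi\to\hat{R}_{G}\xrightarrow{\ \psi\ }\hat{R}_{G_{red}}\to 0$ abstractly: $\psi$ is surjective by Corollary~\ref{reduction}, and $\hat{R}_{G_{red}}$ is a submodule of a finitely generated free $\mathbb{Z}$-module, hence free over the PID $\mathbb{Z}$, hence projective, so the surjection splits and $\hat{R}_{G}\cong\hat{R}_{G_{red}}\oplus\ker\psi$. (Your own worry about the CRT ``canonical representative'' lift is well-founded --- that construction is only a set-theoretic section, and the claimed $\mathbb{Z}$-linearity of reduction to a fixed fundamental domain does not hold --- but, as you say, the projectivity argument makes it unnecessary; note also that only freeness of the target is needed, not of the source.) The paper instead argues constructively with flow-up classes: it takes flow-up bases $B_{red}=\{F^{(1)},\dots,F^{(i)}\}$ of $\hat{R}_{G_{red}}$ and $\mathcal{K}=\{F^{(i+1)},\dots,F^{(n)}\}$ of $\ker\psi$ (via Corollary~\ref{reduction} and Lemma~\ref{image}), lifts each element of $B_{red}$ through $\psi$, invokes Theorem~\ref{minimal} to see that the lifts still have minimal leading entries, and concludes by the flow-up basis criterion of \cite{DA} that the lifts together with $\mathcal{K}$ form a basis of $\hat{R}_{G}$. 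The paper's route buys an explicit flow-up basis, which is what feeds Corollary~\ref{basiscor}; yours buys brevity and generality, at the price of not exhibiting the basis.

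One caveat: your closing justification of the rank-$1$ claim --- that a nontrivial kernel element is ``determined by its last nonzero coordinate'' --- fails when more than one vertex has been reduced: $\ker\psi=\{F\in\hat{R}_{G}\mid f_{v_1}=\dots=f_{v_i}=0\}$ has flow-up basis $F^{(i+1)},\dots,F^{(n)}$ and hence rank $n-i$, and it has rank $1$ only in the one-step case $i=n-1$ treated in Theorem~\ref{surjn}. This tension is already present in the paper itself (the statement asserts rank $1$ while the proof uses the $(n-i)$-element kernel basis $\mathcal{K}$), so you should either restrict to a one-step reduction or state the kernel rank as $n-i$; the splitting argument, which is the substance of your proof, is unaffected.
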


\begin{proof}
	Let $B_{red}$ and $\mathcal{K}$ be flow-up bases for $\hat{R}_{G_{red}}$ and $\ker \psi$, respectively.  By Corollary \ref{reduction} and Lemma \ref{image},  we can assume that  $\mathcal{K}= \{F^{(i+1)},\dots,F^{(n)}\}$  and $B_{red}= \{F^{(1)},\dots,F^{(i)}\}$. By Theorem 4.1 in \cite{DA}, since  $B_{red}$ and $\mathcal{K}$ are flow-up bases,  first nonzero entries of $F^{(j)}$ for all $j$ are minimal. Assume that $B$ contains a preimage  corresponding to each element of $B_{red}$.
From the Theorem \ref{minimal}, $B$ is also  minimal. Then, $B \cup \mathcal{K}$ is a flow-up basis  for $\hat{R}_{G}$. Thus, $B$ and $\mathcal{K}$ complete a  basis for $\hat{R}_{G}$.
\end{proof}

\begin{Cor}\label{basiscor} 
	Let ${(G_{i},\beta_{i})}$ be a sequence of graph reduction so that \[\hat{R}_{G}= \hat{R}_{G_n} \to \hat{R}_{G_{n-1}}\to \cdots \to \hat{R}_{G_{2}} \to \hat{R}_{G_1} \to \hat{R}_{G_0}=0\] where $\psi_i: \hat{R}_{G_{i}}\to \hat{R}_{G_{i-1}}, (f_{v_1},\ldots,f_{v_{i}})\to (f_{v_1},\ldots,f_{v_{i-1}})$. Then, \[\hat{R}_G= ker \psi_1  \oplus ker \psi_{2} \oplus \dots \oplus ker \psi_n \] with submodules $ker \psi_{i}$ of rank 1.

	\end{Cor}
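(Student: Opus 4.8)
The plan is to iterate the decomposition theorem just proved. That theorem gives, for a single reduction step $\psi_i : \hat{R}_{G_i} \to \hat{R}_{G_{i-1}}$, an isomorphism $\hat{R}_{G_i} \cong \hat{R}_{G_{i-1}} \oplus \ker \psi_i$ with $\ker \psi_i$ of rank $1$ (by Theorem~\ref{surjn} the kernel is $\hat{\mathcal{F}}_i \cup (0,\ldots,0) \cong [m_{v_i}, N_i]\mathbb{Z}$). First I would unwind the notation: $G_n = G$, each $G_{i-1}$ is obtained from $G_i$ by a vertex reduction on $v_i$ followed by multiple-edge reduction, and the bottom of the chain $\hat{R}_{G_0} = 0$ records that after removing all vertices nothing is left. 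The surjectivity of each $\psi_i$ is Lemma~\ref{surj1}, and the identification of its kernel is Theorem~\ref{surjn}.

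The main step is a downward induction on $i$ from $n$ to $1$. The base case is the single application of the decomposition theorem: $\hat{R}_{G_n} = \hat{R}_G \cong \hat{R}_{G_{n-1}} \oplus \ker \psi_n$. For the inductive step, assume $\hat{R}_{G_i} \cong \ker\psi_1 \oplus \cdots \oplus \ker\psi_i$ has been established; applying the decomposition theorem to $\psi_i$ itself refines the top summand as $\hat{R}_{G_i} \cong \hat{R}_{G_{i-1}} \oplus \ker\psi_i$, and then the inductive hypothesis applied to $\hat{R}_{G_{i-1}}$ peels off the remaining kernels. Concretely, I would instead run the induction the other way: set $\hat{R}_{G_0} = 0$ and show by upward induction that $\hat{R}_{G_i} \cong \ker\psi_1 \oplus \cdots \oplus \ker\psi_i$, using at each stage the isomorphism $\hat{R}_{G_i} \cong \hat{R}_{G_{i-1}} \oplus \ker\psi_i$ together with the inductive description of $\hat{R}_{G_{i-1}}$. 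Taking $i = n$ gives the claim, and each $\ker\psi_i$ has rank $1$ by Theorem~\ref{surjn}.

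The cleanest way to make the direct-sum identification explicit — and this is the point I'd be most careful about — is to work with flow-up bases rather than with abstract splitting. By Corollary~\ref{reduction} and Lemma~\ref{image}, the reduction chain is compatible with the flow-up filtration: $\ker\psi_i$ corresponds to $\hat{\mathcal{F}}_i$ and the image of the composite $\hat{R}_{G_n} \to \hat{R}_{G_i}$ consists of the splines supported on coordinates $\{1,\ldots,i\}$. Choosing a minimal flow-up class $F^{(i)}$ at each level and lifting it back up through the $\psi_j$'s (which is possible because each $\psi_j$ is surjective, and which preserves minimality by Theorem~\ref{minimal}), I obtain a flow-up basis $\{F^{(1)},\ldots,F^{(n)}\}$ of $\hat{R}_G$ with $F^{(i)}$ spanning a rank-$1$ copy of $\ker\psi_i$; since a flow-up basis of $\hat{R}_G$ over the PID $\mathbb{Z}$ realizes $\hat{R}_G$ as the direct sum of the rank-$1$ submodules generated by its members (the leading-term positions being distinct), the decomposition $\hat{R}_G = \ker\psi_1 \oplus \cdots \oplus \ker\psi_n$ follows.

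The main obstacle I anticipate is purely bookkeeping: keeping straight which spline module each $\psi_i$ acts on and verifying that the successive kernels, which a priori live in different modules $\hat{R}_{G_i}$, can all be regarded as rank-$1$ submodules of the fixed module $\hat{R}_G$ via the lifts provided by Lemma~\ref{surj1}. Once the lifting/minimality compatibility from Theorem~\ref{minimal} is in hand, no new ideas are needed — the statement is essentially the assertion that a finite composition of the rank-$1$-kernel surjections already analyzed assembles into a single direct-sum decomposition, i.e. an iterated application of the preceding theorem.
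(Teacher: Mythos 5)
Your proposal is correct and takes essentially the same route as the paper: the corollary is obtained by iterating the preceding decomposition theorem $\hat{R}_{G_i}\cong \hat{R}_{G_{i-1}}\oplus\ker\psi_i$ (with each $\ker\psi_i$ of rank $1$ by Theorem~\ref{surjn}), and your explicit realization of the summands by lifting minimal flow-up classes through the surjections, using Lemma~\ref{surj1} and the minimality preservation of Theorem~\ref{minimal}, is exactly the paper's flow-up-basis argument. Your remark that the kernels, which a priori live in different modules $\hat{R}_{G_i}$, must be regarded inside the fixed module $\hat{R}_G$ via chosen preimages corresponds precisely to the paper's choice of the lifted set $B$ of $B_{red}$.
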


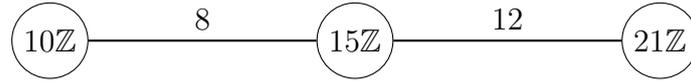
\begin{figure}[h]
	\begin{center}

		\begin{tikzpicture}
			\node[main node] (1) {$10 \mathbb{Z}$};
			\node[main node] (2) [right = 3cm of 1]  {$15 \mathbb{Z}$};
			\node[main node] (3) [right = 3cm of 2] {$21 \mathbb{Z}$};
			
			\path[draw,thick]
			(1) edge node [above]{$ 8$} (2)
			(2) edge node [above]{$ 12$} (3);

		\end{tikzpicture}

	\end{center}
	
	\caption{Example of an edge-labeled path graph $(P_3,\beta)$} \label{exp3}

\end{figure}

\begin{Ex}
	
	Consider the edge-labeled path graph $(P_3,\beta)$ as in Figure \ref{exp3}. By Corollary  \ref{basiscor}, we know that $\hat R_G= ker \psi_1  \oplus ker \psi_{2} \oplus ker \psi_3 $ with 
	a corresponding flow-up basis
 \begin{equation*}
 	B = \begin{Bmatrix}
 		\left( \begin{array}{c}
42 \\
90\\
10 			
 		\end{array} \right) , &

 		\left( \begin{array}{c}
 			0 \\
 			120\\
 			0
 		\end{array} \right) , &

 		\left( \begin{array}{c}
 		84\\
 			0 \\
 			0
 		\end{array} \right) 
 	\end{Bmatrix}.
 \end{equation*}

\end{Ex}

\section*{Acknowledgement}

 This work is derived from the author’s PhD thesis conducted in the Department of Mathematics, Graduate School of Science and Engineering, Hacettepe University, and was supported
by the TÜBİTAK BİDEB 2211-Y scholarship program.


\begin{thebibliography}{}



		\bibitem{AS2019}  S. Altınok, S. Sarıoğlan, \textit{Flow-up bases for generalized spline modules on arbitrary graphs}, J. Algebra Appl.  \href{https://www.worldscientific.com/doi/10.1142/S0219498821501802}{doi: 10.1142/S0219498821501802}, \textbf{2019}.
	
	
	\bibitem{Billera} 	L. Billera, \textit{ Homology of smooth splines: generic triangulations and a conjecture of Strang}, Trans. Amer. Math. Soc.	310, no. 1, 325-340. MR 965757 (89k:41010), \textbf{1988}.
	
	

	
	\bibitem{BR91} L. Billera, L. Rose, \textit{A dimension series for multivariate splines}, Discrete Comput. Geom. 6, no. 2, 107-128. MR
	1083627 (92g:41010), \textbf{1991}.
	
	\bibitem{BR92} L. Billera, L. Rose, \textit{Modules of piecewise polynomials and their freeness}, Math. Zeit. 209, 485-497,\textbf{ 1992}.
	
	
	\bibitem{DA}  G. Dilaver, S. Altınok, \textit{Extending generalized splines over integers},  \href{https://arxiv.org/abs/2505.04342}{arXiv:2505.04342}, \textbf{2025}.
	

	\bibitem{GPT}  S. Gilbert, S. Polster, J. Tymoczko, \textit{Generalized splines on arbitrary graphs}, Pacific J. Math. 281 (2) 333-364, \textbf{2016}.

	
	\bibitem{HMR}  M. Handschy, J. Melnick, S. Reinders, \textit{Integer generalized splines on cycles},  \href{https://arxiv.org/abs/1409.1481}{arXiv:1409.1481}, \textbf{2014}.
	
	
	\bibitem{Rose95} L. Rose, \textit{Combinatorial and topological invariants of modules of piecewise polynomials}, Adv. Math. 116, no. 1, 34–45.
	MR 1361478 (97b:13036) \textbf{1995}.
	
	\bibitem{Rose04}  L. Rose. Graphs, \textit{Syzygies, and multivariate splines}, Discrete Comput. Geom. 32, no. 4, 623-637. MR 2096751
	(2005g:41024), \textbf{2004}.
	

	
	\bibitem{RS} L. Rose, J. Suzuki. \textit{Generalized Integer Splines on Arbitrary Graphs} \href{https://doi.org/10.1016/j.disc.2022.113139}{Discrete Mathematics, 346.1: 113139}, \textbf{2023}.
	
	
	\bibitem{Schenck} H. Schenck, \textit{A spectral sequence for splines}, Adv. in Appl. Math. 19, no. 2, 183–199, \textbf{1997}.


	
	\bibitem{Tymoczko} J. Tymoczko, \textit{Splines in geometry and topology}, Computer Aided Geometric Design 45 (2016): 32-47, \textbf{2016}.
	
	



\end{thebibliography}
\end{document}